\def\bz{{\mathbb Z}\,}
\def\bq{{\mathbb Q}}
\def\bg{{\mathbb G}}
\def\spec{{\rm{Spec}}\,}
\def\fb{\overline{F}}
\def\ut{\widetilde{U}}
\def\cok{{\rm{Coker}}}
\def\tto{\tt^{\e\circ}}
\def\krn{\text{Ker}\e}
\def\sto{{\s T}^{\e\circ}}
\def\be{\kern -.1em}
\def\lbe{\kern -.05em}
\def\s{\mathcal }
\def\ra{\rightarrow}
\def\e{\kern 0.08em}
\def\le{\kern 0.04em}
\def\ng{\kern -0.04em}
\def\tt{\widetilde{\s T}}
\def\fc{\fb\!\phantom{.}^{\lbe *}}
\newtheorem{theorem}{Main Theorem\!\!}
\newtheorem{lemma}{Lemma}[section]
\newtheorem{teorema}[lemma]{Theorem}
\newtheorem{corollary}[lemma]{Corollary}
\theoremstyle{definition}
\theoremstyle{remark}
\newtheorem{remark}[lemma]{Remark}
\newtheorem{remarks}[lemma]{Remarks}
\begin{document}

\title[Chevalley's formula for tori]{Chevalley's ambiguous class
number formula for an arbitrary torus}

\subjclass[2000]{Primary 20G30; Secondary 11R99}

\author{Cristian D. Gonz\'alez-Avil\'es}
\address{Departamento de Matem\'aticas, Universidad Andr\'es Bello,
Santiago, Chile} \email{cristiangonzalez@unab.cl}

\keywords{Class numbers of tori, N\'eron-Raynaud models, Nisnevich
cohomology, norm tori}

\thanks{The author is partially supported by Fondecyt grant
1061209 and Universidad Andr\'es Bello grant DI-29-05/R}

\maketitle

\begin{abstract} In this paper we obtain Chevalley's ambiguous
class number formula for an arbitrary torus $T$ over a global field.
The classical formula of C.Chevalley may be recovered by setting
$T=\bg_{m}$ in our formula. A key ingredient of the proof is the
work of X.Xarles on groups of components of N\'eron-Raynaud models
of tori.
\end{abstract}

\section{Introduction}

Let $F$ be a global field and let $S$ be a nonempty finite set of
primes of $F$ containing the archimedean primes in the number field
case. Further, let $C_{F,\e S}$ denote the $S$-ideal class group of
$F$ and let $K/F$ be a finite Galois extension with Galois group
$G$. We will write $C_{K,\e S}$ for the $S_{K}$-ideal class group of
$K$, where $S_{K}$ is the set of primes of $K$ lying above the
primes in $S$. Now let $\s O_{K,S}$ denote the ring of
$S_{K}$-integers of $K$ and, for each prime $v$ of $F$, let $e_{v}$
denote the ramification index of $v$ in $K/F$. The order of a finite
set $M$ will be denoted by $[M]$.

In his 1933 thesis [5], C.Chevalley obtained the following ambiguous
class number formula, which has become well-known.

\begin{teorema} We have

$$
\frac{\left[\e C_{K,\e S}^{\e G}\e\right]}{ \left[\e
C_{F,S}\e\right]}=\frac{\left[\e H^{1}\be\big(G, K^{*}/\e\s O_{K,\e
S}^{\e *}\e\big)\e\right]}{\left[\e H^{1}\ng\big(G,\s O_{K,\e S}^{\e
*}\big)\e\right]}\displaystyle\prod_{v\notin S} e_{v}.
$$
\end{teorema}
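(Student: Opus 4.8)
The plan is to derive the formula from the long exact cohomology sequence associated to the short exact sequence of $G$-modules
$$
1 \ra \s O_{K,\e S}^{\e *} \ra K^{*} \ra K^{*}/\e\s O_{K,\e S}^{\e *} \ra 1.
$$
Writing $I_{K,S}$ for the group of fractional $\s O_{K,S}$-ideals, there is a canonical isomorphism $K^{*}/\e\s O_{K,\e S}^{\e *}\simeq P_{K,S}$, the group of principal $S_K$-ideals, and an exact sequence $1\ra P_{K,S}\ra I_{K,S}\ra C_{K,S}\ra 1$. Since $I_{K,S}$ is the free abelian group on the primes of $K$ not in $S_K$, with $G$ permuting the generators over each $v\notin S$ with stabilizer the decomposition group $G_w$, one has $I_{K,S}^{\e G}=I_{F,S}$ and, by Shapiro's lemma, $H^{1}(G,I_{K,S})=0$ while $H^{2}(G,I_{K,S})=\bigoplus_{v\notin S}H^{2}(G_w,\bz)=\bigoplus_{v\notin S}\bz/e_v'\bz$ where $e_v'$ is the order of $G_w$ modulo inertia — more precisely $\widehat{H}^{0}(G_w,\bz)$ and the Herbrand quotient bookkeeping will produce the factor $\prod_{v\notin S}e_v$.

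The key steps, in order, are as follows. First I would take $G$-cohomology of $1\ra P_{K,S}\ra I_{K,S}\ra C_{K,S}\ra 1$ to get
$$
0 \ra P_{K,S}^{\e G} \ra I_{F,S} \ra C_{K,S}^{\e G} \ra H^{1}(G,P_{K,S}) \ra 0,
$$
using $H^{1}(G,I_{K,S})=0$. Second, I would take $G$-cohomology of $1 \ra \s O_{K,\e S}^{\e *} \ra K^{*} \ra P_{K,S} \ra 1$ to relate $H^{1}(G,P_{K,S})$ to $H^{1}\big(G,\s O_{K,\e S}^{\e *}\big)$, $H^{1}(G,K^{*})$, and $H^{2}\big(G,\s O_{K,\e S}^{\e *}\big)$; here Hilbert 90 gives $H^{1}(G,K^{*})=0$, so
$$
0 \ra \s O_{F,S}^{\e *} \ra F^{*} \ra P_{K,S}^{\e G} \ra H^{1}\big(G,\s O_{K,\e S}^{\e *}\big) \ra 0
$$
and $H^{1}(G,P_{K,S})\hookrightarrow H^{2}\big(G,\s O_{K,\e S}^{\e *}\big)$. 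Third, I would compare cardinalities: from the first sequence $[C_{K,S}^{G}]=[C_{F,S}]\cdot[H^{1}(G,P_{K,S})]\cdot\big([I_{F,S}:P_{K,S}^{G}]/[I_{F,S}:P_{F,S}]\big)^{-1}$, and $[I_{F,S}:P_{K,S}^{G}]=[F^{*}:\s O_{F,S}^{\e *}\text{-image}]\cdots$; the cleanest route is to assemble everything into one large commutative diagram with exact rows and columns relating the four groups $\s O^{*}$, $K^{*}$, $I$, $C$ over $F$ and the $G$-invariants over $K$, then apply multiplicativity of the Herbrand-type index (the alternating product of orders around an exact sequence is $1$). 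Finally, the factor $\prod_{v\notin S}e_v$ emerges as $[H^{2}(G,I_{K,S})\cap(\text{image of }H^{1}(G,P_{K,S}))]$ balanced against the local contributions, i.e. from $\widehat{H}^{0}(G_w,\bz)=\bz/|G_w|$ together with the short exact sequence $0\ra I_w\ra G_w\ra G_w/I_w\ra 1$ which isolates the ramification index $e_v=[I_w]$.

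The main obstacle I expect is the careful bookkeeping in the last step: disentangling which part of $H^{2}\big(G,\s O_{K,S}^{\e *}\big)$ and $H^{1}(G,P_{K,S})$ contributes the local ramification factors, and verifying that no spurious global cohomological term (coming from $H^{2}(G,K^{*})$, i.e. the Brauer group, or from the unit groups) survives in the final ratio. Concretely, one must check that in forming the quotient $[C_{K,S}^{G}]/[C_{F,S}]$ all the "global" contributions cancel in pairs except for the stated $H^{1}$ terms, leaving only the product of inertia indices $e_v$ over $v\notin S$; this is a delicate diagram chase with the Herbrand quotient, and is the technical heart of Chevalley's original argument.
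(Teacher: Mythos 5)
The paper itself never proves this statement directly: Theorem 1.1 is quoted as Chevalley's 1933 result, and the body of the paper recovers it as the special case $T=\bg_{m,F}$ of the Main Theorem, which is established by entirely different means (Nisnevich cohomology of N\'eron--Raynaud models and Xarles's description of component groups). Your proposal aims at a direct classical proof built on the two short exact sequences $1\to\s O_{K,S}^*\to K^*\to K^*/\s O_{K,S}^*\to 1$ and $1\to P_{K,S}\to I_{K,S}\to C_{K,S}\to 1$. That is a legitimate and genuinely different route, and it does close, but only after correcting a misstep that sits exactly at the step producing $\prod_{v\notin S}e_v$.

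The gap is the claim that $I_{K,S}^{\e G}=I_{F,S}$. Both groups are free abelian on the primes $v\notin S$, but the \emph{natural} map $I_{F,S}\to I_{K,S}$ (extension of ideals) sends $v$ to $e_v\cdot\sum_{w\mid v}w$; its image therefore lies in $I_{K,S}^{\e G}$ with cokernel $\bigoplus_{v\notin S}\bz/e_v\bz$, of order $\prod_{v\notin S}e_v$. That finite index is precisely where the ramification factor enters, and it enters in degree $0$, not through $H^2$. Taking $G$-invariants of $1\to P_{K,S}\to I_{K,S}\to C_{K,S}\to 1$ (with $H^1(G,I_{K,S})=0$ by Shapiro) gives $[\e C_{K,S}^{\e G}\e]=[\e I_{K,S}^{\e G}/P_{K,S}^{\e G}\e]\cdot[\e H^1(G,P_{K,S})\e]$; comparing with $[\e C_{F,S}\e]=[\e I_{F,S}/P_{F,S}\e]$ via the two vertical injections and the snake lemma yields
$$
\frac{[\e I_{K,S}^{\e G}/P_{K,S}^{\e G}\e]}{[\e I_{F,S}/P_{F,S}\e]}=\frac{[\e I_{K,S}^{\e G}:I_{F,S}\e]}{[\e P_{K,S}^{\e G}:P_{F,S}\e]}=\frac{\prod_{v\notin S}e_v}{[\e H^1(G,\s O_{K,S}^{\e *})\e]},
$$
where the denominator is your Hilbert-90 computation for $1\to\s O_{K,S}^*\to K^*\to P_{K,S}\to 1$. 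Assembling these gives the formula with no appeal to $H^2(G,I_{K,S})$, Herbrand quotients, or the Brauer group. Your final paragraph, which tries to extract $\prod e_v$ from $\widehat H^{\e 0}(G_w,\bz)=\bz/|G_w|$ and $H^2(G,I_{K,S})$, cannot work as written: $|G_w|=e_vf_v$, and $H^2(G_w,\bz)$ has order $[\e G_w^{\mathrm{ab}}\e]$, neither of which equals $e_v$. The ramification index is isolated by the cokernel of the ideal-extension map in degree zero, not by inertia inside $\widehat H^{\e 0}$ or $H^2$.
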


In this paper we extend the above formula to an arbitrary torus $T$
over $F$ (the preceding formula may then be recovered from our Main
Theorem below by setting $T=\bg_{m,F}$ there).

Let $\ut=\spec {\s O}_{K,\e S}$, let $T$ be an arbitrary torus over
$F$ and let $\tto$ denote the identity component of the
N\'eron-Raynaud model of $T_{K}$ over $\ut$. Then $G$ acts on the
class group $C_{\e T,K,S}=C\be\big(\e\tt^{\circ}\e\big)$. Now, for
each prime $v\notin S$, we fix once and for all a prime $w=w_{v}$ of
$K$ lying above $v$ and write $\kappa(v)$ (resp. $\kappa(w)$) for
the residue field of $F$ (resp. $K$) at $v$ (resp. $w$). Let $G_{w}$
be the decomposition group of $w$ in $K/F$. For each $v\notin S$, we
will write $d_{v}$ for the dimension of the largest split subtorus
of $T_{F_{v}}$, where $F_{v}$ is the completion of $F$ at $v$. Now
let $\Phi_{w}$ denote the scheme of connected components of the
reduction of $\tt$ at $w$. There exist maps
$$
H^{1}\ng\big(G,T(K)\big/\e\tto\be\big(\ut\big)\big)
\ra\displaystyle\bigoplus_{v\notin S}
H^{1}\!\left(G_{w_{v}},\Phi_{w_{v}}\be(\kappa(\lbe
w_{v}\lbe))\right)
$$
and
$$
H^{1}\ng\big(G,\tto\be(\ut)\big)\ra H^{1}(G,T(K)).
$$

\smallskip

\noindent We denote their kernels by $H^{1}\be\big(G,
T(K)/\e\tto\be\big(\ut\big)\big)^{\e\prime}$ and $H^{\e
1}\ng\big(G,\tto\be(\ut)\big)^{\e\prime}$, respectively. Finally,
let $U=\spec\s O_{F,S}$ and let ${\s T}^{\le\circ}$ denote the
identity component of the N\'eron-Raynaud model of $T$ over $U$.

\begin{theorem} We have

$$
\frac{\left[\e C_{\e T,K,S}^{G}\e\right]}{ \left[\e C_{\e
T,F,S}\e\right]}=\frac{\left[\e H^{1}\be\big(G,
T(K)/\e\tto\be\big(\ut\big)\big)^{\e\prime}\e\right]\displaystyle
\prod_{\e v\notin S}\, e_{v}^{d_{v}}\ell_{v}}{ \left[
H^{1}\ng\big(G,\tto\be(\ut)\big)^{\e\prime}\e\right]\left[\e
\tto\lbe\big(\ut\e\big)^{G}\be\colon\be {\s
T}^{\circ}\be(U)\e\right]^{-1}}\,,
$$
where the $\ell_{v}$ are certain local factors described in Lemma
3.5.
\end{theorem}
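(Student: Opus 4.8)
The plan is to derive the formula from a careful analysis of the $G$-cohomology of a short exact sequence of $G$-modules relating $T(K)$, the integral points $\tto(\ut)$ of the identity component of the Néron–Raynaud model, and the class group $C_{\e T,K,S}$. Concretely, Néron–Raynaud theory over $\ut$ should give an exact sequence
$$
0\ra \tto\be(\ut)\ra T(K)\ra T(K)/\tto\be(\ut)\ra 0,
$$
together with an identification of $C_{\e T,K,S}=C\be(\tt^{\e\circ})$ as (a piece of) $H^{1}$ of $\tto$ over $\ut$; dually one has the analogous data over $U$ for ${\s T}^{\le\circ}$. Taking $G$-invariants and the long exact cohomology sequence, and comparing with the base $F$ via restriction-corestriction, one obtains a diagram whose snake lemma / Herbrand-quotient bookkeeping produces the ratio $[\e C_{\e T,K,S}^{G}]/[\e C_{\e T,F,S}]$ as an alternating product of orders of the flanking cohomology groups, an $H^{1}$ of $T(K)$ term, and the index $[\tto(\ut)^{G}\be\colon\be{\s T}^{\circ}(U)]$.

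First I would set up the global-to-local comparison: the difference between $\tto\be(\ut)$ and the full Néron model points, and between $C(\tt^{\e\circ})$ and $C(\tt)$, is governed purely by the component group schemes $\Phi_{w}$ at the primes $w\notin S_{K}$. This is exactly where Xarles's computation of the groups of components of Néron–Raynaud models of tori enters: it lets me evaluate $[\Phi_{w}(\kappa(w))]$ and the relevant local $H^{1}(G_{w},\Phi_{w}(\kappa(w)))$, and it is what ultimately contributes the factors $e_{v}^{d_{v}}$ (the split part behaving as in Chevalley's classical case, raised to the dimension $d_{v}$ of the maximal split subtorus) and the residual local correction factors $\ell_{v}$. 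I would isolate these local contributions in a preliminary lemma — presumably Lemma~3.5, which the statement already cites for the definition of $\ell_{v}$ — computing for each $v\notin S$ the order of the local cohomology of $\Phi_{w}(\kappa(w))$ under the $G_{w}$-action, using the known structure of $\Phi_{w}$ as a finite group scheme built from the character lattice of $T$ and its inertia action.

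Next I would assemble the pieces. Writing $A=\tto\be(\ut)$, $B=T(K)$, $Q=B/A$, the long exact sequence in $G$-cohomology reads
$$
0\ra A^{G}\ra B^{G}\ra Q^{G}\ra H^{1}(G,A)\ra H^{1}(G,B)\ra H^{1}(G,Q)\ra\cdots,
$$
and the primed groups $H^{1}(G,A)^{\e\prime}$ and $H^{1}(G,Q)^{\e\prime}$ are precisely the kernels cutting out the pieces that map into the global class-group data rather than into the purely local $\bigoplus_{v\notin S}H^{1}(G_{w},\Phi_{w})$ or into $H^{1}(G,T(K))$. The class group $C_{\e T,K,S}^{G}$ appears as the cokernel of $B^{G}\ra Q^{G}$ corrected by these kernels, while $C_{\e T,F,S}$ is the analogous cokernel downstairs; dividing the two and using multiplicativity of orders in exact sequences yields the stated identity, with the index $[\tto(\ut)^{G}\colon{\s T}^{\circ}(U)]$ recording the discrepancy between $A^{G}$ and the honest integral points over $U$. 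The main obstacle, I expect, is not the homological algebra but the precise matching of the global objects with cohomology of the Néron-model points — i.e. proving that $C_{\e T,K,S}$ really sits inside $H^{1}(\ut,\tto)$ in the way required, controlling the failure of $\tto\be(\ut)^{G}$ to equal ${\s T}^{\circ}(U)$, and, above all, carrying out the local analysis at the ramified primes where $\Phi_{w}$ is genuinely nontrivial so that the factors $e_{v}^{d_{v}}\ell_{v}$ come out correctly; verifying that setting $T=\bg_{m,F}$ collapses everything to $\prod_{v\notin S}e_{v}$ then serves as the consistency check.
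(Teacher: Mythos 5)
Your high-level plan is in the right family — the proof does rest on the Néron--Raynaud component-group exact sequence over $\ut$, on Xarles's description of the $\Phi_w$, and on a $q$-multiplicativity bookkeeping argument — but as written it misidentifies the central exact sequence and, with it, where $C_{\e T,K,S}^{G}$ actually comes from. The paper does not extract the class group from $H^{1}$ of the two-step sequence $0\to\tto(\ut)\to T(K)\to T(K)/\tto(\ut)\to 0$; rather, the key object is the four-term $G$-module sequence
$$
1\ra\tto\be(\ut)\ra T(K)\ra\bigoplus_{w\notin S_K}\Phi_{w}(\kappa(w))\ra C_{\e T,K,S}\ra 0,
$$
obtained from $0\to\tto\to\tt\to\bigoplus(i_w)_*\Phi_w\to 0$ together with the Nisnevich/Cartan--Leray identification $C_{\e T,K,S}=\ker\big[H^1_{\text{\'et}}(\ut,\tto)\to H^1_{\text{\'et}}(\ut,\tt)\big]$. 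Your assertion that ``$C_{\e T,K,S}^{G}$ appears as the cokernel of $B^G\to Q^G$'' (with $Q=T(K)/\tto(\ut)$) is not correct: the cokernel of $T(F)\to Q^G$ is the primed $H^1$ of $\tto(\ut)$, not $C_{\e T,K,S}^{G}$, and the ``analogous cokernel downstairs'' you propose is trivial rather than $C_{\e T,F,S}$. The quotient $Q$ embeds into $\bigoplus_{w}\Phi_{w}(\kappa(w))$, and it is the cokernel of \emph{that} embedding (after $G$-invariants, and corrected via the transitory subgroup) that yields the class group.

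The second gap is that your account of the local factors is off. You describe $e_v^{d_v}\ell_v$ as coming from ``the order of the local cohomology of $\Phi_w(\kappa(w))$ under the $G_w$-action,'' but the local cohomology $H^1(G_{w_v},\Phi_{w_v})$ only serves to define the primed groups $H^1(G,\cdot)'$; the factors $\ell_v$ are not its order. What is actually computed (Lemma 3.5 in the paper) is the kernel/cokernel ratio $q(\delta_v)$ of the \emph{component-group capitulation map}
$$
\delta_v\colon\Phi_v(\kappa(v))\ra\Phi_{w_v}(\kappa(w_v))^{G(w_v)},
$$
which compares the component group of $\s T$ at $v$ with the $G(w)$-invariants of the component group of $\tt$ at $w$. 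This map, together with the global capitulation $j_{\e T,K/F}\colon C_{\e T,F,S}\to C_{\e T,K,S}^{G}$ and the induced $\gamma$, sit in the commutative diagram (9) of the paper, and the identity $\prod_v q(\delta_v)=q(\gamma)\,q(j'_{\e T,K/F})$ coming from that diagram is the engine of the proof. Your sketch omits this diagram entirely, and without it there is no route from the long exact $G$-cohomology sequence to a formula involving $[C_{\e T,F,S}]$ in the denominator. To repair the argument you would need to (i) replace your two-term sequence with the four-term one above, (ii) introduce the capitulation maps $j_{\e T,K/F}$ and $\delta_v$ and the resulting exact commutative diagram, and (iii) show that the local factors are $q(\delta_v)=e_v^{d_v}\ell_v$, with $\ell_v$ the quotient of orders of $H^1(I_{\overline{w}_v},\mathrm{Ker}\,N_v)^{G_{\kappa(v)}}$ by $H^1(I_{\overline{v}},\mathrm{Ker}\,N_v)^{G_{\kappa(v)}}$ as in Lemma 3.5, which is where Xarles's explicit description of $\Phi$ in terms of the character lattice and inertia genuinely enters.
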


We should remark that some authors have discussed the {\it norm map}
for the trivial torus $\bg_{\be F}$ (see Section 5). However, the
capitulation map for arbitrary tori has not been discussed
previously. In Section 4 we specialize our main result to the case
of norm tori and obtain some interesting new formulas.

\section*{Acknowledgements}

I am very grateful to D.Lorenzini for several helpful comments and
for encouraging me to write Section 4. I also thank the referee for
suggesting several ways to improve the original version of this
work. Finally, I thank B.Kunyavskii and X.Xarles for sending me
copies of [10] and [15].

\section{Preliminaries}

Let $F$ be a global field, i.e. $F$ is a finite extension of ${\Bbb
Q}$ (the ``number field case") or is finitely generated and of
transcendence degree 1 over a finite field of constants (the
``function field case"). Let $S$ be a nonempty finite set of primes
of $F$ containing the archimedean primes in the number field case
and let ${\s O}_{F,\e S}$ denote the ring of $S$-integers of $F$. We
will write $U=\spec {\s O}_{F,\e S}$. Further, for any
non-archimedean prime $v$ of $F$, $F_{\be v}$ will denote the
completion of $F$ at $v$ and ${\s O}_{v}$ will denote the ring of
integers of $F_{\be v}$. The residue field of ${\s O}_{v}$ will be
denoted by $\kappa(v)$. Further, we will write $j\colon\spec F\ra U$
for the inclusion of the generic point of $U$ and, for each closed
point $v\in U$, $i_{v}\colon\spec\kappa(v)\ra U$ will denote the
similar map at $v$.

\smallskip

Let $K/F$ be a finite Galois extension of global fields with Galois
group $G$. We fix a separable algebraic closure $\fb$ of $F$
containing $K$ and write $G_{K}$ for $\text{Gal}\big(\e\fb/K\big)$.
The set of primes of $K$ lying above the primes in $S$ will be
denoted by $S_{K}$. For each prime $v$ of $F$, we fix once and for
all a prime $w$ of $K$ lying above $v$ and write $G_{w}$ for the
decomposition group of $w$ in $K/F$. When necessary, we will write
$w_{v}$ for $w$. The inertia subgroup of $G_{w}$ will be denoted by
$I_{w}$. Now, for each $v$ as above, we fix a prime
$\overline{w}=\overline{w_{v}}$ of $\fb$ lying above $w=w_{v}$. Then
$\fb_{\be\overline{w}}$ is a separable algebraic closure of $F_{v}$
containing $K_{w}$. We
$I_{\e\overline{w}}=\text{Gal}\big(\e\fb_{\be\overline{w}}/
K_{w}^{\text{nr}}\e\big)$ and
$I_{\e\overline{v}}=\text{Gal}\big(\e\fb_{\be\overline{w}}/
F_{v}^{\text{nr}}\e\big)$, where $K_{w}^{\text{nr}}$ (resp.
$F_{v}^{\text{nr}}$) is the maximal unramified extension of $K_{w}$
(resp. $F_{v}$) inside $\fb_{\be\overline{w}}$. Clearly,
$I_{\e\overline{w}}$ is a subgroup of $I_{\e\overline{v}}$ and there
exist natural isomorphisms
$I_{\e\overline{v}}/I_{\e\overline{w}}=\text{Gal}(K_{w}^{\text{nr}}/
F_{v}^{\text{nr}})=I_{w}$. We will write $G(w)$ (or $G(w_{v})$, if
necessary) for $\text{Gal}(\kappa(w)/\kappa(v))$, which we will
identify with the quotient $G_{w}/I_{w}$. Further, we will write
$e_{v}$ for the ramification index of $v$ in $K$.

\smallskip

Now, for any finite set of primes $S^{\e\prime}$ of $F$ containing
$S$, set
$$
{\Bbb A}_{ F,S^{\prime}}=\prod_{v\in
S^{\prime}}F_{v}\times\prod_{v\notin S^{\prime}}{\s O}_{v}.
$$
Then the ring of adeles of $F$ is by definition
$$
{\Bbb A}_{F}=\varinjlim_{S^{\prime}\supset S} {\Bbb A}_{
F,S^{\prime}}.
$$
Let $\s H$ be a $U$-group scheme of {\it finite type} with smooth
generic fiber. The {\it class set of $\,\s H$} is by definition the
double coset space
$$
C(\s H)=\s H(\lbe F\lbe)\be\backslash\e \s H\!\left(\lbe{\Bbb
A}_{F}\lbe\right)\be/\e\s H\!\left(\lbe{\Bbb A}_{F,S}\lbe\right).
$$
The cardinality of this set, when finite (which is the case if $\s
H_{F}$ is a torus over $F$ [6, \S 3]), is called the {\it class
number of $\s H$}. If $\s H=\bg_{m,U}$, $C(\s H)$ may be identified
with the $S$-ideal class group of $F$.

\smallskip

Let ${\s O}_{K,\e S}$ be the ring of $S_{K}$-integers of $K$, let
$\ut=\spec {\s O}_{K,\e S}$ and let $\widetilde{\jmath}\e\colon
\spec K\ra\ut$ be the inclusion of the generic point. Now let $T$ be
an $F$-torus and let $\tt$ denote the N\'eron-Raynaud model of
$T_{K}$ over $\ut$. Thus $\tt$ is a smooth and separated $\ut$-group
scheme which is {\it locally} of finite type and represents the
sheaf $\widetilde{\jmath}_{\e *}T_{K}$ on $\ut_{\text{sm}}$. See [2,
\S 10] for more details. The N\'eron-Raynaud model of $T$ over $U$
will be denoted by $\s T$. Let $\tto$ (resp. $\s T^{\le\circ}$)
denote the (fiberwise) identity component of $\tt$ (resp. $\s T$).
Then $\tto$ is an affine [9, Proposition 3, p.18]\footnote{ The
proof of this proposition is valid in the function field case as
well, as pointed out by D.Lorenzini.} {\it smooth} $U$-group scheme
of {\it finite type}. For each prime $w\notin S_{K}$, let
$\Phi_{w}=i_{w}^{*}\be\big(\le\tt/\tto\le\big)$ denote the sheaf of
connected components of $\tt$ at $w$. Then $\Phi_{w}$ is represented
by an \'etale $\kappa(w)$-scheme of finite type and hence completely
determined by the $G_{\lbe\kappa(w)}$-module
$\Phi_{w}\be\big(\e\overline{\kappa\small(w\small)}\e\big)$. For
each prime $v\notin S$, $\Phi_{v}$ will denote the sheaf of
connected components of $\s T$ at $v$. If $v\notin S$ and $w=w_{v}$
is the prime of $K$ lying above $v$ fixed previously, there exists a
canonical capitulation map
\begin{equation}
\delta_{v}\colon\Phi_{v}(\kappa(v))\ra
\Phi_{w}\be\big(\kappa(w))^{G(w)}
\end{equation}

We will need the {\it Nisnevich topology}. It is a Grothendieck
topology on $U$ stronger than the Zariski topology but weaker than
the \'etale topology. It was introduced in [16] in order to
generalize the well-known cohomological interpretation
$C_{F,S}=H^{\e 1}_{\text{\'et}}(U,\bg_{m})$ of the $S$-ideal class
group of $F$ to arbitrary, generically smooth, $U$-group schemes $\s
H$ of finite type. That is, the following holds:
\begin{equation}
C(\s H)=H^{\e 1}_{\text{Nis}}(U,\s H).
\end{equation}
See [17, esp. pp.281-289] for a partial account of the above
theory\footnote{ Readers wishing to learn more about the non
$K$-theoretic applications of the Nisnevich topology, and who are
not prepared to wait for the publication of a proper survey, are
advised to obtain a copy of [16].}. In this paper, the $S$-class
group $C_{\e T,F,S}$ of $T$ over $F$ is defined to be
$C\lbe\big(\e\s T^{\e\circ}\lbe\big)$ (as noted above, $\s
T^{\e\circ}$ is of finite type and not only generically smooth but
in fact smooth). Its cardinality will be denoted by $h_{\e T,F,S}$.

\begin{remark} One can also define a class group
$C_{\e T,F,S}=C(\s H)$ for some other integral model of finite type
$\s H$ of $T$. We choose the model $\sto$ because, with this choice,
the well-known exact sequence
$$
1\ra\s O_{K,S}^{\e *}\ra K^{*}\ra\s I_{K,S}\ra C_{K,S}\ra 0,
$$
where $\s I_{K,S}$ denotes the group of fractional $S$-ideals of
$K$, admits the natural generalization (5) below (this will play a
fundamental role in [8]). Assume now that $F$ is a number field.
V.Voskresenskii [21, \S 20] and B.Kunyavskii et al. [10, p.47] have
defined (for $F$ a number field and $S$ equal to the set of
archimedean primes of $F$) a class group of $T$ using a certain
``standard model" $\s H=\s T_{0}$ of $T$ which (unlike the
N\'eron-Raynaud model $\s T$ of $T$) is of finite type (but can be
non-smooth). The corresponding class number $h(\s T_{0})$ is minimal
among all class numbers $h(\s T_{1})$ as $\s T_{1}$ runs over the
family of all integral models of $T$ of finite type (see [21,
p.197]). Since $\sto$ is such a model (see above), one has $h(\s
T_{0})\!\mid\! h_{\e T,F,\infty}$. When $T$ splits over a {\it
tamely ramified} extension of $F$, the identity components of $\s T$
and $\s T_{0}$ are canonically isomorphic [10, Theorem 3, p.49]. In
this case, therefore, if $\s T_{0}$ has connected fibers (which is
the case, for example, if $T$ is a quasi-split $F$-torus of the form
$T=R_{K/F}(\bg_{m,K}^{\le d})$, where $K/F$ is a tamely ramified
Galois extension of number fields [10, p.48]), then $h(\s T_{0})$
and $h_{\e T,F,\infty}$ coincide.
\end{remark}

\smallskip

We will write $X={\rm{Hom}}\!\left(\e
T\!\left(\e\fb\e\right),\fc\right)$ for the $G_{\lbe F}$-module of
characters of $T$. For any $G_{\be F}$-lattice $Y$, we will write
$Y^{\le\vee}$ for the $\bz$-linear dual of $Y$, i.e.,
$Y^{\le\vee}=\text{Hom}_{\e\bz}\be(Y,\bz)$.

\smallskip

If $f\colon A\ra B$ is a homomorphism of abelian groups with finite
kernel and cokernel, we define
$$
q(f)=\frac{[\cok\e f\e]}{[{\rm{Ker}}\e f\e]}.
$$
This function is multiplicative on short exact sequences, in the
sense that if $f^{\le\bullet}\colon X^{\bullet}\ra Y^{\bullet}$ is a
map of short exact sequences and $q(f^{r})$ is defined for
$r=1,2,3$, then $q(f^{2})=q(f^{1})q(f^{3})$.

\smallskip

If $M$ is any abelian group and $m$ is any positive integer, $M_{m}$
will denote its $m$-torsion subgroup and we will write $M/m$ for
$M/mM$. Applying $\text{Tor}^{\e\bz}(\bz/m,-)$ to a given exact
sequence $0\ra A\ra B\ra C\ra 0$ of abelian groups, we obtain an
exact sequence
\begin{equation}
0\ra A_{m}\ra B_{m}\ra C_{m}\overset{c}\longrightarrow A/m\ra B/m\ra
C/m\ra 0,
\end{equation}
where $c$ is a certain ``connecting homomorphism". See, e.g., [22,
\S 3.1.1, p.66]. The (Pontryagin) dual of $M$ is by definition
$M^{D}=\text{Hom}(M,\bq/\bz)$. If $G$ is a group and $M$ is a (left)
$G$-module, $M^{D}$ will be endowed with its natural $G$-action
(see, e.g., [1, p.94]). Further, $M_{G}$ (resp. $M^{G}$) will denote
the largest quotient (resp. subgroup) of $M$ on which $G$ acts
trivially.  It is not difficult to check that $(M^{D})^{\e
G}=(M_{G})^{D}$. Further, if $M$ is finite and $G$ is pro-cyclic,
then $[M_{G}]=\left[\e M^{\le G}\le\right]$ (see, e.g., [1, proof of
Proposition 11, p.109]). It follows that $\left[\e(M^{D})^{\le
G}\e\right]=[M^{\le G}]$ (we will use the latter fact often when
referring to [23, Corollary 2.18, p.175]). We also note that, if $H$
is a normal subgroup of $G$, then there exists a canonical
isomorphism $\bz\be[G]^{H}=\bz\be[\le G\lbe/\lbe H\le]$. In addition
to the above, we will need the following well-known facts: if
$M=\bz\be[G]\otimes Y$ for some abelian group $Y\,$\footnote{ All
tensor products in this paper are taken over $\bz$.}, then the Tate
groups $\widehat{H}_{\e 0}(G,M)=\widehat{H}^{\e 0}(G,M)=0$ (see,
e.g., [1, Proposition 6, p.102]). If $Y$ is a torsion-free abelian
group, then $\text{Tor}_{1}^{\e\bz}(Y,-)=0$. Further, if $G$ is a
torsion group which acts trivially on $Y$, then $H^{\e
1}(G,Y)=\text{Hom}(G,Y)=0$.

\section{Proof of the main theorem}

There exists a natural exact sequence of sheaves on
$\ut_{\text{sm}}$:
$$
0\ra\tto\ra\tt\ra\bigoplus_{w\notin S_{K}}
\,\,(i_{w})_{*}\Phi_{w}\ra 0.
$$
The preceding exact sequence induces an exact sequence
\begin{equation}
1\ra\tto\be\big(\ut\big)\ra T(K)\ra\bigoplus_{w\notin
S_{K}}\,\Phi_{w}(\kappa(w))\ra
H^{1}_{{\rm{\acute{e}t}}}\big(\e\ut,\tto\big)\ra
H^{1}_{{\rm{\acute{e}t}}}\big(\e\ut,\tt\e\big).
\end{equation}

\begin{lemma} There exists a canonical isomorphism
$$
{\rm{Ker}}\ng\left[\e
H^{1}_{{\rm{\acute{e}t}}}\big(\e\ut,\tto\big)\ra
H^{1}_{{\rm{\acute{e}t}}}\big(\e\ut,\tt\e\big)\e\right]=H^{1}_{{\rm{Nis}}}(\ut,
\tto)=C_{\e T,K,S}.
$$
\end{lemma}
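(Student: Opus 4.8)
The claim packages together two facts: a Nisnevich-cohomological identity and a comparison of kernels. The plan is to establish each separately. For the identification $H^{1}_{\mathrm{Nis}}(\ut,\tto)=C_{\e T,K,S}$, recall that $C_{\e T,K,S}=C\be\big(\tt^{\circ}\big)$ by definition (made here for the scheme $\ut$ in place of $U$), and that $\tto$ is a smooth $\ut$-group scheme of finite type by the discussion preceding the lemma. Hence equation (2) of the preliminaries, $C(\s H)=H^{1}_{\mathrm{Nis}}(U,\s H)$, applies verbatim over $\ut$ with $\s H=\tto$, giving the right-hand equality. So the substance of the lemma is the left-hand equality: that Nisnevich $H^{1}$ of $\tto$ computes the kernel of the \'etale map $H^{1}_{\mathrm{\acute et}}(\ut,\tto)\ra H^{1}_{\mathrm{\acute et}}(\ut,\tt)$.

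First I would recall the general mechanism relating Nisnevich and \'etale $H^{1}$ for a smooth group scheme: by Nisnevich's comparison results there is an exact sequence, or at least an injection, expressing $H^{1}_{\mathrm{Nis}}$ as the subgroup of $H^{1}_{\mathrm{\acute et}}$ consisting of torsors that are locally trivial for the Nisnevich topology, i.e.\ trivialized by \'etale covers admitting sections on all residue fields. Concretely, a $\tto$-torsor on $\ut_{\mathrm{\acute et}}$ lies in the image of $H^{1}_{\mathrm{Nis}}$ precisely when its restriction to each henselization (equivalently each residue field) of $\ut$ is trivial. The key point is then to show that this ``Nisnevich-locally trivial'' condition is exactly the condition of dying in $H^{1}_{\mathrm{\acute et}}(\ut,\tt)$. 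I would argue this using the exact sequence $0\ra\tto\ra\tt\ra\bigoplus_{w\notin S_{K}}(i_{w})_{*}\Phi_{w}\ra 0$ of sheaves on $\ut_{\mathrm{sm}}$: a $\tto$-torsor becomes trivial as a $\tt$-torsor iff its image in $H^{1}_{\mathrm{\acute et}}(\ut,\tt)$ vanishes, and since $\tt$ represents $\widetilde{\jmath}_{*}T_{K}$, its \'etale $H^{1}$ is controlled at the level of the generic point and the stalks $\Phi_{w}$; a torsor in the kernel is one that extends the trivial $T_{K}$-torsor on $\spec K$, hence is determined by ``component-group'' data at the finitely many $w\notin S_{K}$, which is visible over the residue fields — precisely the Nisnevich-local triviality. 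The remaining task is to check the map $H^{1}_{\mathrm{Nis}}(\ut,\tto)\ra H^{1}_{\mathrm{\acute et}}(\ut,\tto)$ is injective, so that $H^{1}_{\mathrm{Nis}}$ really equals (and not merely surjects onto) the kernel; this follows because a $\tto$-torsor that is Nisnevich-locally trivial and \'etale-trivial is Nisnevich-trivial — $\tto$ being smooth, \'etale-triviality plus triviality on residue fields forces triviality in the Nisnevich topology.

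The main obstacle I anticipate is the careful identification of the kernel: one must verify that \emph{every} class in $\ker\big[H^{1}_{\mathrm{\acute et}}(\ut,\tto)\ra H^{1}_{\mathrm{\acute et}}(\ut,\tt)\big]$ is Nisnevich-locally trivial, not just that Nisnevich-trivial classes die in $H^{1}_{\mathrm{\acute et}}(\ut,\tt)$. For this one wants that $\tt/\tto\cong\bigoplus_{w}(i_{w})_{*}\Phi_{w}$ is ``Nisnevich-acyclic'' in the relevant sense — its $H^{0}$ already sees all the component-group information locally, so any $\tto$-torsor killed in $\tt$ is, Nisnevich-locally on $\ut$, induced from the trivial torsor. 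Stalk-wise this reduces to the statement that over a henselian local ring with residue field $\kappa(w)$ the sequence (4) degenerates appropriately, which in turn rests on the fact that $\tt\to\tt/\tto$ has sections Nisnevich-locally because $\tto$ is smooth (hence \'etale-locally, and by henselianity Nisnevich-locally, surjective on points). Once this local surjectivity is in hand, a diagram chase with (4) over $\ut$ versus over each henselization yields the desired identification of the kernel with $H^{1}_{\mathrm{Nis}}(\ut,\tto)$, completing the proof.
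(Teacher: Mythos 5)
The right-hand equality is handled correctly: you correctly note that $C_{\e T,K,S}=C\be\big(\tt^{\circ}\big)$ by definition and invoke equation (2) over $\ut$ with $\s H=\tto$, which is smooth and of finite type, so this part is fine and matches the paper.

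For the left-hand equality, however, your argument stays at the level of plausibility and never closes the gap. The paper's proof is short precisely because it separates two clean steps that you run together. First, because $\tt=\widetilde{\jmath}_{*}T_{K}$, the Cartan-Leray spectral sequence for $\widetilde{\jmath}\colon\spec K\ra\ut$ gives an injection $H^{1}_{\text{\'et}}(\ut,\tt)\hookrightarrow H^{1}(K,T)$; this converts ``dies in $H^{1}_{\text{\'et}}(\ut,\tt)$'' into the cleaner condition ``generically trivial,'' i.e.\ the kernel in the statement equals ${\rm Ker}\bigl[H^{1}_{\text{\'et}}(\ut,\tto)\ra H^{1}(K,T)\bigr]$. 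You gesture at this (``a torsor in the kernel is one that extends the trivial $T_{K}$-torsor on $\spec K$'') but never justify it; without the spectral-sequence injection this is not automatic. Second, the paper then quotes Nisnevich's theorem [17, 1.44.2], which says exactly that for a smooth affine group scheme of finite type over a Dedekind base, $H^{1}_{\mathrm{Nis}}$ is the kernel of restriction to the generic point in $H^{1}_{\text{\'et}}$. You attempt to re-derive this from first principles, but the crucial implication --- that every generically trivial \'etale $\tto$-torsor is trivial over every henselization of $\ut$ --- is asserted via a vague appeal to ``Nisnevich-acyclicity'' of $\bigoplus_w(i_w)_*\Phi_w$ and an unexecuted diagram chase. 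This is the hard content of Nisnevich's theorem, and your sketch neither proves it nor reduces to a citable statement. (Your separate argument for injectivity of $H^{1}_{\mathrm{Nis}}\to H^{1}_{\text{\'et}}$ is also circular: ``\'etale-triviality plus triviality on residue fields forces Nisnevich-triviality'' is vacuous since \'etale-triviality already means the torsor is trivial; the injectivity you want is the general fact that a map of sites induces an injection on $H^{1}$ of a sheaf defined on the coarser site.) In short, the strategy is pointed in the right direction, but as written there is a genuine gap: you need either to invoke [17, 1.44.2] after the Cartan-Leray reduction as the paper does, or to actually prove the henselian-local triviality claim rather than describe what a proof would look like.
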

\begin{proof} Since $\tt=\widetilde{\jmath}_{\e *}T_{K}$ as
\'etale sheaves, the Cartan-Leray spectral sequence
$$
H^{p}_{\text{\'et}}\big(\e\ut,R^{\e q}\widetilde{\jmath}_{\e
*}T_{K}\big)\implies H^{p+q}_{\text{\'et}}(K,T)
$$
yields an injection $H^{1}_{\text{\'et}}\big(\e\ut,\tt\e\big)=
H^{1}_{\text{\'et}}\big(\e\ut,\widetilde{\jmath}_{\e
*}T_{K}\e\big)\hookrightarrow H^{1}_{\text{\'et}}(K,T\e)$. Thus
$H^{1}_{\text{\'et}}\big(\e\ut,\tt\e\big)$ may be replaced with
$H^{1}(K,T\e)$ in the statement of the lemma. Now [17, 1.44.2,
p.286] completes the proof.
\end{proof}

By the lemma and (2), (4) induces a canonical exact sequence of
$G$-modules
\begin{equation}
1\ra\tto\be\big(\ut\big)\ra T(K)\ra\displaystyle\bigoplus_{v\notin
S}\bigoplus_{w\mid v}\,\Phi_{w}(\kappa(w))\ra  C_{\e T,K,S}\ra 0.
\end{equation}
We will identify the $G$-module $\bigoplus_{w\mid
v}\,\Phi_{w}(\kappa(w))$ with the $G$-module {\it coinduced} by the
$G_{w_{v}}$-module $\Phi_{w_{v}}(\kappa(w_{v}))$. Thus, by Shapiro's
lemma,
$$
H^{\e i}\!\big(G,\textstyle\bigoplus_{w\mid
v}\,\Phi_{w}(\kappa(w))\big)=H^{\e
i}(G_{w_{v}},\Phi_{w_{v}}\be(\kappa(w_{v})))
$$
for every $i\geq 0$. Note also that, since $I_{w_{v}}$ acts
trivially on $\Phi_{w_{v}}\be(\kappa(w_{v}))$,
$$
\Phi_{w_{v}}\be(\kappa(w_{v}))^{G_{w_{v}}}=\Phi_{w_{v}}\be(\kappa(w_{v}))
^{G(w_{v})}.
$$

Clearly, (5) induces an exact sequence
$$
C_{\e T,K,S}^{\e G}\overset{\alpha}\longrightarrow
H^{1}\ng\big(G,T(K)\big/\e\tto\be\big(\ut\big)\big)
\overset{\beta}\longrightarrow\displaystyle\bigoplus_{v\notin S}
H^{1}\!\left(G_{w_{v}},\Phi_{w_{v}}\be(\kappa(w_{v}))\right).
$$
Define
$$
(C_{\e T,K,S}^{\e G})_{{\text{trans}}}=\krn\e(\alpha)
$$
and
\begin{equation}
H^{1}\be\big(G, T(K)/\e\tto\be\big(\ut\big)\big)^{\e\prime}=
{\rm{Ker}}\e(\beta).
\end{equation}
Then
\begin{equation}
\left[\e C_{\e T,K,S}^{\e G}\e\right]= \left[\e (C_{\e T,K,S}^{\e
G})_{{\text{trans}}}\e\right] \left[\e H^{1}\be\big(G,
T(K)/\e\tto\be\big(\ut\big)\big)^{\e\prime}\e\right].
\end{equation}

\begin{remark} If $T$ splits over $K$, i.e., $T_{K}=\bg_{m,K}^{\le d}$
for some $d$, then
$H^{1}\!\left(G_{w_{v}},\Phi_{w_{v}}\be(\kappa(w_{v}))\right)=
\text{Hom}\!\left(G_{w_{v}},\bz^{\! d}\le\right)=0$. In this case,
therefore, $H^{1}\be\big(G,
T(K)/\e\tto\be\big(\ut\big)\big)^{\e\prime}=H^{1}\be\big(G,
T(K)/\e\tto\be\big(\ut\big)\big)$.
\end{remark}

Now there exists a canonical {\it capitulation map}
$$
j_{\e T, K/F}\colon C_{\e T,F,S}\ra C_{\e T,K,S}^{\e G}
$$
which is the composite
$$
H^{\e 1}_{\text{Nis}}(U,\sto)\overset{\text{ad}}\ra H^{\e
1}_{\text{Nis}}\be\big(\e
U,\pi_{*}\pi^{*}\e\sto\e\big)\hookrightarrow H^{\e
1}_{\text{Nis}}\be\big(\e\ut,\pi^{*}\sto\big)^{G}\overset{\text{bc}}
\ra H^{\e 1}_{\text{Nis}}\be\big(\ut,\tto\big)^{G},
$$
where $\pi\colon\ut\ra U$ is the canonical map, ``ad" is induced by
the adjoint morphism $\sto\ra\pi_{*}\pi^{*}\e\sto$, the middle
injection is the first nontrivial map in the exact sequence of terms
of low degree belonging to the Cartan-Leray spectral sequence $H^{\e
p}_{\text{Nis}}(U,R^{\e q}\pi_{*}\pi^{*}\sto)\!\!\implies\!\!
H^{p+q}_{\text{Nis}}\be\big(\e\ut,\pi^{*}\sto\big)$ [17, 1.22.1,
p.270] and the map ``bc" is induced by the base change morphism
$\pi^{*}\sto=\sto\times_{U}\e\ut\ra\tto$ [2, \S 7.2, Theorem 1(i),
p.176]. It is not difficult to check that the image of $j_{\e
T,K/F}$ is contained in $(C_{\e T,K,S}^{\e G})_{{\text{trans}}}$. We
will write $j_{\e T,K/F}^{\e\prime}\colon C_{\e T,F,S}\ra (C_{\e
T,K,S}^{\e G})_{{\text{trans}}}$ for the induced map. Then (7) may
be rewritten as
\begin{equation}
\frac{\left[\e C_{\e T,K,S}^{\e G}\e\right]}{ \left[\e
C_{T,F,S}\e\right]}=q(\e j_{\e T,K/F}^{\e\prime}\e) \left[\e
H^{1}\be\big(G,
T(K)/\e\tto\be\big(\ut\big)\big)^{\e\prime}\e\right].
\end{equation}
Now there exists a natural exact commutative diagram

\begin{equation}
\xymatrix{0\ar[r] & T(F)/\e{\s T}^{\e\circ}(U)\ar[d]^{\gamma}\ar[r]
& \displaystyle\bigoplus_{v\notin
S}\Phi_{v}(\kappa(v))\ar@{->>}[r]\ar[d]^{\bigoplus_{v\notin
S}\delta_{v}} &
C_{\e T,F,S}\ar[d]^{j_{\e T,K/F}^{\e\prime}}\\
0\ar[r] &\big(T(K)/\e\tto\be\big(\ut\big)\big)^{G}\ar[r] &
\displaystyle\bigoplus_{v\notin
S}\Phi_{w_{v}}\be(\kappa(w_{v}))^{G(w_{v})}\ar@{->>}[r] & (C_{\e
T,K,S}^{\e G})_{{\text{trans}}}}
\end{equation}
where, for each $v\notin S$, $\delta_{v}$ is the capitulation map
(1) and $\gamma$ is induced by $\bigoplus_{\e v\notin S}\delta_{v}$.

\smallskip

\begin{lemma} For each $v\notin S$, there exists an isomorphism of
finite groups
$$
{\rm{Ker}}\ng\left[\e\Phi_{v}(\kappa(v))\overset{\delta_{v}
}\longrightarrow\Phi_{w_{v}}\be(\kappa(w_{v}))^{G(w_{v})}\e\right]=
H^{1}\!\left(I_{w_{v}},T\!\left(K_{w_{v}}
^{\e{\rm{nr}}}\right)\right)^{G_{\kappa(v)}}.
$$
Here $I_{w_{v}}$ is identified with
${\rm{Gal}}(K_{w_{v}}^{\text{nr}}/ F_{v}^{\text{nr}})$. In
particular, if $v$ is unramified in $K/F$, then $\delta_{v}$ is
injective.
\end{lemma}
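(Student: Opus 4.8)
The statement is local at $v$, so replace $F$ by $F_{v}$ and $K$ by $K_{w}$; write $\s N$ (resp. $\s M$) for the N\'eron--Raynaud model of $T_{F_{v}}$ over $\s O_{v}$ (resp. of $T_{K_{w}}$ over $\s O_{K_{w}}$), with identity components $\s N^{\e\circ}$, $\s M^{\e\circ}$, so that $\Phi_{v}=i_{v}^{*}(\s N/\s N^{\e\circ})$ and $\Phi_{w}=i_{w}^{*}(\s M/\s M^{\e\circ})$.

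I would first pass to the strictly henselian situation. Since N\'eron--Raynaud models commute with ind-\'etale base change, the base changes of $\s N$ and $\s M$ to $\s O_{F_{v}^{\mathrm{nr}}}$ and $\s O_{K_{w}^{\mathrm{nr}}}$ are again the N\'eron--Raynaud models of $T$. Because $H^{1}_{\mathrm{\acute{e}t}}$ of a strictly henselian local ring with values in a smooth group scheme vanishes, evaluating $0\ra\s N^{\e\circ}\ra\s N\ra i_{v*}\Phi_{v}\ra 0$ on $\s O_{F_{v}^{\mathrm{nr}}}$-points (and $\s M$ on $\s O_{K_{w}^{\mathrm{nr}}}$-points) gives
$$
\Phi_{v}(\kb)=T(F_{v}^{\mathrm{nr}})/\s N^{\e\circ}(\s O_{F_{v}^{\mathrm{nr}}}),\qquad
\Phi_{w}(\kb)=T(K_{w}^{\mathrm{nr}})/\s M^{\e\circ}(\s O_{K_{w}^{\mathrm{nr}}}),
$$
with $\kb$ the common algebraic closure of the residue fields, $I_{w}=\mathrm{Gal}(K_{w}^{\mathrm{nr}}/F_{v}^{\mathrm{nr}})$ acting on the right-hand objects (trivially on $\Phi_{w}(\kb)$, the inertia of a totally ramified extension acting trivially on the residue field and $\Phi_{w}$ being \'etale), and $G_{\kappa(v)}=\mathrm{Gal}(F_{v}^{\mathrm{nr}}/F_{v})$ acting compatibly. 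Since $\Phi_{v}(\kappa(v))=\Phi_{v}(\kb)^{G_{\kappa(v)}}$ and $\Phi_{w}(\kappa(w))^{G(w)}=\Phi_{w}(\kb)^{G_{\kappa(v)}}$, and $\delta_{v}$ is the restriction to invariants of the natural map $\Phi_{v}(\kb)\ra\Phi_{w}(\kb)$, it is enough to compute the kernel of the latter as a $G_{\kappa(v)}$-module and then take $G_{\kappa(v)}$-invariants.

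That map is induced by $T(F_{v}^{\mathrm{nr}})=T(K_{w}^{\mathrm{nr}})^{I_{w}}\hookrightarrow T(K_{w}^{\mathrm{nr}})$ modulo connected parts, which makes sense because the canonical map $\s N^{\e\circ}\times_{\s O_{v}}\s O_{K_{w}}\ra\s M$ coming from the N\'eron property sends the connected source into $\s M^{\e\circ}$, so $\s N^{\e\circ}(\s O_{F_{v}^{\mathrm{nr}}})\subseteq\s M^{\e\circ}(\s O_{K_{w}^{\mathrm{nr}}})$ inside $T(K_{w}^{\mathrm{nr}})$; hence
$$
\mathrm{Ker}\,[\,\Phi_{v}(\kb)\ra\Phi_{w}(\kb)\,]=\s M^{\e\circ}(\s O_{K_{w}^{\mathrm{nr}}})^{I_{w}}/\s N^{\e\circ}(\s O_{F_{v}^{\mathrm{nr}}}).
$$
Feeding in the $I_{w}$-cohomology sequences of $1\ra\s M^{\e\circ}(\s O_{K_{w}^{\mathrm{nr}}})\ra T(K_{w}^{\mathrm{nr}})\ra\Phi_{w}(\kb)\ra 1$ (using $T(K_{w}^{\mathrm{nr}})^{I_{w}}=T(F_{v}^{\mathrm{nr}})$) and of the inclusion $\s N^{\e\circ}(\s O_{K_{w}^{\mathrm{nr}}})\hookrightarrow\s M^{\e\circ}(\s O_{K_{w}^{\mathrm{nr}}})$ (whose $I_{w}$-invariants are $\s N^{\e\circ}(\s O_{F_{v}^{\mathrm{nr}}})\hookrightarrow\s M^{\e\circ}(\s O_{K_{w}^{\mathrm{nr}}})^{I_{w}}$, as $\s N^{\e\circ}$ is affine over $\s O_{v}$), a diagram chase rewrites this quotient as a subquotient of $H^{1}(I_{w},-)$ of the connected-part integral points. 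The structural input from Xarles's work [15] on groups of components --- his exact sequences expressing $\Phi_{v}(\kb)$ and $\Phi_{w}(\kb)$ through the inertia action on the cocharacter lattice of $T$ --- together with the Hilbert~90 type vanishing for $\s M^{\e\circ}$ and $\s N^{\e\circ}$ (smooth group schemes over strictly henselian local rings, analysed through their congruence filtrations), then identifies that subquotient with $H^{1}(I_{w},T(K_{w}^{\mathrm{nr}}))$; taking $G_{\kappa(v)}$-invariants yields the asserted isomorphism. Finiteness is automatic: the free part of $\Phi_{v}(\kappa(v))$ (of rank $d_{v}$) injects under $\delta_{v}$ because a split subtorus stays split, so $\mathrm{Ker}\,\delta_{v}$ lies in a finite torsion group, while $H^{1}(I_{w},-)$ is $e_{v}$-torsion.

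The main obstacle is precisely the comparison of $\s N^{\e\circ}$ with $\s M^{\e\circ}$: these genuinely differ because N\'eron models do not descend along the ramified extension $K_{w}^{\mathrm{nr}}/F_{v}^{\mathrm{nr}}$, and pinning the resulting discrepancy group down to exactly $H^{1}(I_{w},T(K_{w}^{\mathrm{nr}}))$ is where Xarles's computation of the component group is essential, via the functoriality of his sequences under the inflation--restriction machinery attached to $1\ra I_{\overline{w}}\ra I_{\overline{v}}\ra I_{w}\ra 1$. Finally the ``in particular'' is immediate: if $v$ is unramified in $K/F$ then $K_{w}\subseteq F_{v}^{\mathrm{nr}}$, hence $K_{w}^{\mathrm{nr}}=F_{v}^{\mathrm{nr}}$, $I_{w}=1$, and the right-hand side vanishes.
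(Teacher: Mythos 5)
Your reduction is a genuinely different route from the paper's and it is sound up to a point, but the crux is left as a gap. You correctly set up the strictly henselian picture, get
$\Phi_{v}(\kb)=T(F_{v}^{\mathrm{nr}})/\s N^{\e\circ}(\s O_{F_{v}^{\mathrm{nr}}})$, $\Phi_{w}(\kb)=T(K_{w}^{\mathrm{nr}})/\s M^{\e\circ}(\s O_{K_{w}^{\mathrm{nr}}})$,
correctly note that taking $G_{\kappa(v)}$-invariants is left exact so it suffices to compute the kernel over $\kb$, and correctly identify that kernel with $\s M^{\e\circ}(\s O_{K_{w}^{\mathrm{nr}}})^{I_{w}}/\s N^{\e\circ}(\s O_{F_{v}^{\mathrm{nr}}})$. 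But the whole content of the lemma is the identification of this quotient with $H^{1}(I_{w},T(K_{w}^{\mathrm{nr}}))$, and you do not actually produce it: the sentence about a ``diagram chase'', ``Hilbert 90 type vanishing for $\s M^{\e\circ}$ and $\s N^{\e\circ}$'' and ``Xarles's exact sequences'' is a gesture, not an argument. Note in particular that $\s M^{\e\circ}$ is smooth over $\s O_{K_{w}}$, not over $\s O_{v}$, so $I_{w}$ acts on $\s M^{\e\circ}(\s O_{K_{w}^{\mathrm{nr}}})$ through the \emph{ramified} extension $K_{w}^{\mathrm{nr}}/F_{v}^{\mathrm{nr}}$ and there is no obvious Lang/Hensel vanishing of $H^{1}(I_{w},\s M^{\e\circ}(\s O_{K_{w}^{\mathrm{nr}}}))$ available; the relevant $I_{w}$-cohomology sequences do not close up without further input.

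The paper takes the dual route: it invokes Xarles's Corollary 2.18 (in reference [23], not [15] as you cite) together with Milne's Corollary I.2.4 to identify $\Phi_{v}(\kappa(v))_{\e\mathrm{tors}}\cong H^{1}(I_{\overline{v}},T)^{G_{\kappa(v)}}$ and $\Phi_{w}(\kappa(w))^{G(w)}_{\e\mathrm{tors}}\cong H^{1}(I_{\overline{w}},T)^{G_{\kappa(v)}}$, checks (this is the nontrivial commutativity recorded in the footnote) that under these identifications $\delta_{v}$ becomes the restriction map $H^{1}(I_{\overline{v}},T)\ra H^{1}(I_{\overline{w}},T)$, and then reads off the kernel from inflation--restriction for $1\ra I_{\overline{w}}\ra I_{\overline{v}}\ra I_{w}\ra 1$; it first reduces to torsion parts using Edixhoven--Liu--Lorenzini to get finiteness. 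Working ``from the outside'' (Xarles's Galois-cohomological description of $\Phi$) makes the kernel computation a one-line inflation--restriction, whereas working ``from the inside'' (integral points of the two identity components) as you propose forces one to compare $\s N^{\e\circ}$ and $\s M^{\e\circ}$ directly across a ramified extension, which is exactly the hard comparison that Xarles's cocharacter computation is designed to bypass. Your ``in particular'' clause and the finiteness remarks are fine (though the injectivity of $\delta_{v}$ on the free part should be combined with a snake-lemma step, as in the paper's use of Corollary 3.3 of [3] later on, rather than asserted via a direct sum decomposition that $\delta_{v}$ need not respect). To turn this into a proof you would need to supply the middle step explicitly, and at that point you would in effect be reproving Xarles's Proposition 2.14/Corollary 2.18 rather than citing it.
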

\begin{proof} Set $w=w_{v}$. Since $\text{Ker}\,\delta_{v}$ is a
torsion group (cf. [7, Theorem 1]), we have $\text{Ker}\,\delta_{v}=
\text{Ker}\,\delta_{v}^{\e\prime}$, where
$\delta_{v}^{\e\prime}\colon \Phi_{v}(k(v))_{\e\text{tors}}\ra
\Phi_{w}(\kappa(w))^{G(w)}_{\e\text{tors}}$ is the induced map.
Since $\Phi_{v}(k(v))_{\e\text{tors}}$ is finite by [23, Corollary
2.18], we conclude that $\text{Ker}\,\delta_{v}$ is finite as well.
Now, by [23, Corollary 2.18] and [13, Corollary I.2.4, p.35], there
exists a natural exact commutative diagram\footnote{ The
commutativity of this diagram follows by examining the proofs of
Lemma 2.13, Proposition 2.14, Lemma 2.17 and Corollary 2.18 in [23].
The key point is that the map $H^{1}(I_{\overline{w}},X)\ra
H^{1}(I_{\overline{v}},X)$ (notation as in [op.cit.]) which
corresponds to the map $\text{Ext}_{\bz}^{\e
1}(\Phi_{w},\bz)\ra\text{Ext}_{\bz}^{\e 1}(\Phi_{v},\bz)$ under the
isomorphism of [op.cit., Proposition 2.14] is induced by the norm
map $N_{I_{w}}\colon X^{I_{\overline{w}}}\ra X^{I_{\overline{v}}}$
(see, especially, the diagram at the bottom of p.174 of [op.cit.]).}
\[
\xymatrix{\Phi_{v}(\kappa(v))_{\e\text{tors}}\ar[d]^{\delta^{\e\prime}
_{\e v}}\ar[r]^{\sim}&
H^{1}(I_{\overline{v}},T)^{\e G_{k(v)}}\ar[d]^{\text{Res}}\\
\Phi_{w}(\kappa(w))^{G(w)}_{\e\text{tors}}\ar[r]^{\sim}&
H^{1}(I_{\overline{w}},T)^{\e G_{k(v)}}, }
\]
where the right-hand vertical map is induced by the restriction map
$H^{1}(I_{\overline{v}},T)\ra H^{1}(I_{\overline{w}},T)$. The result
is now immediate from the inflation-restriction exact sequence.
\end{proof}

An $F$-torus $T$ is said to have {\it good reduction} at a
nonarchimedean prime $v$ of $F$ if ${\s T}^{\e\circ}_{{\s O}_{v}}$
is a torus over $\s O_{v}$. The following are equivalent conditions
(see [15, (1.1), p.462]): (a) $T$ has good reduction at $v$ as
defined above; (b) ${\s T}^{\e\circ}_{\kappa(v)}$ is a torus over
$\kappa(v)$; (c) $I_{\overline{v}}$ acts trivially on $X$; (d) $T$
splits over an unramified extension of $F_{v}$.

Since only finitely many primes of $F$ can ramify in a splitting
field of $T$, we see that $T$ has good reduction at all but finitely
many primes of $F$.

\begin{lemma} Let $v$ be a nonarchimedean prime of $F$ which is
{\rm{unramified}} in $K/F$ and where $T$ has {\rm{good reduction}}.
Then the capitulation map
$$
\delta_{v}\colon\Phi_{v}(\kappa(v))\ra
\Phi_{w_{v}}\be(\kappa(w_{v}))^{G(w_{v})}
$$
is an isomorphism.
\end{lemma}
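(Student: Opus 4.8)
The plan is to prove injectivity and surjectivity of $\delta_{v}$ separately. Injectivity requires nothing new: since $v$ is unramified in $K/F$, the inertia group $I_{w_{v}}=\operatorname{Gal}(K_{w_{v}}^{\text{nr}}/F_{v}^{\text{nr}})$ is trivial (indeed $K_{w_{v}}^{\text{nr}}=F_{v}^{\text{nr}}$), so $H^{1}\!\left(I_{w_{v}},T\!\left(K_{w_{v}}^{\text{nr}}\right)\right)=0$ and hence $\operatorname{Ker}\delta_{v}=0$ by the previous lemma --- this is exactly its ``in particular'' clause. The whole content of the statement is therefore surjectivity.

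For surjectivity, write $w=w_{v}$ and let $\pi\colon\ut\ra U$ be the canonical map. Because $v\notin S$ is unramified in $K/F$, $\pi$ is \'etale over a Zariski neighborhood of $v$ in $U$, and N\'eron--Raynaud models are compatible with \'etale base change (see [2, \S 7.2]); hence in a neighborhood of each prime $w\mid v$ the base-change morphism identifies $\tt$ with $\s T\times_{U}\ut$ and $\tto$ with $\sto\times_{U}\ut$. Passing to the fiber at $w$ and forming schemes of connected components then yields a canonical isomorphism $\Phi_{w}\cong\Phi_{v}\times_{\kappa(v)}\kappa(w)$ of \'etale $\kappa(w)$-group schemes, under which --- and this is the point that needs care --- the capitulation map $\delta_{v}$ of (1) is the map induced on sections. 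Granting this, set $M=\Phi_{v}\!\left(\overline{\kappa(v)}\right)=\Phi_{w}\!\left(\overline{\kappa(w)}\right)$ for the common underlying abelian group; then $G_{\kappa(w)}\subseteq G_{\kappa(v)}$ acts on $M$ by restriction, $G(w)=G_{\kappa(v)}/G_{\kappa(w)}$, and $\delta_{v}$ becomes the Galois-descent isomorphism
\[
\Phi_{v}(\kappa(v))=M^{G_{\kappa(v)}}=\left(M^{G_{\kappa(w)}}\right)^{G(w)}=\Phi_{w}(\kappa(w))^{G(w)},
\]
which completes the argument.

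A more hands-on alternative uses that $T$ has good reduction at $v$: then $I_{\overline{v}}$ acts trivially on $X$, hence on $X^{\vee}$ (criterion (c) above), so that $\Phi_{v}\!\left(\overline{\kappa(v)}\right)\cong X^{\vee}$ as a $G_{\kappa(v)}$-module (cf.\ [23]); since $K_{w}\subseteq F_{v}^{\text{nr}}$, the torus $T_{K}$ likewise has good reduction at $w$, so $\Phi_{w}\!\left(\overline{\kappa(w)}\right)\cong X^{\vee}$ as a $G_{\kappa(w)}$-module, and one concludes by the same computation of invariants. In either approach the one genuinely delicate step is the compatibility noted above, namely that $\delta_{v}$ is the map on schemes of connected components induced by the base-change morphism $\sto\times_{U}\ut\ra\tto$ of [2, \S 7.2]; I would establish it by unwinding the definition of $\delta_{v}$ and invoking the functoriality of the N\'eron--Raynaud model and of the passage to fiberwise identity components under \'etale base change. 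Once it is in place, the remainder of the proof is purely formal.
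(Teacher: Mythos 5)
Your proof is correct, but it takes a genuinely different route from the paper's. The paper's own argument is extremely brief: it invokes Proposition~3.2 of Bosch--Liu~[3] together with functoriality to reduce, using the good-reduction hypothesis, to the split case $T=\bg_{m,F}$, where $\delta_v\colon\bz\to\bz$ is multiplication by $e_v$; the unramifiedness hypothesis then gives $e_v=1$ and the lemma follows. You instead argue via \'etale base change of N\'eron--Raynaud models: since $v$ is unramified, $\pi\colon\ut\to U$ is \'etale near $v$, so $\tto$ is identified with $\sto\times_U\ut$ near $w$, hence $\Phi_w\cong\Phi_v\times_{\kappa(v)}\kappa(w)$, and $\delta_v$ becomes the Galois-descent isomorphism $\Phi_v(\kappa(v))=M^{G_{\kappa(v)}}=(M^{G_{\kappa(w)}})^{G(w)}=\Phi_w(\kappa(w))^{G(w)}$. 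This is a cleaner conceptual reason for the statement, and it is worth pointing out that your first argument never actually invokes good reduction at $v$ --- it only uses that $v$ is unramified in $K/F$ --- so it proves a formally stronger statement. (This is consistent with Lemma~3.5 specialized to $e_v=1$ and $I_{\overline{w}_v}=I_{\overline{v}}$, which gives $q(\delta_v)=1$, and with Lemma~3.3, which gives $\mathrm{Ker}\,\delta_v=0$ in the unramified case.) The one place you correctly flag as needing care --- that $\delta_v$ as defined via the capitulation map is precisely the map on sections induced by the base-change morphism $\pi^*\sto\to\tto$ of [2, \S 7.2] --- is genuinely the crux; it is essentially built into the paper's construction of $j_{T,K/F}$ and of $\delta_v$ as its local component, but you are right that this must be unwound rather than asserted. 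Your second ``hands-on'' variant, identifying $\Phi_v(\overline{\kappa(v)})$ and $\Phi_w(\overline{\kappa(w)})$ with $X^\vee$ via good reduction (using~[23]), is closer in spirit to the paper's reduction to the split case; either variant is a sound replacement for the paper's appeal to~[3].
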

\begin{proof} Proposition 3.2 of [3] and functoriality
immediately reduce the proof to the case $T={\Bbb G}_{m,F}$, where
$\delta_{v}\colon\bz\ra\bz$ is multiplication by the ramification
index $e_{v}$ of $v$ in $K/F$. The lemma is clear in this case.
\end{proof}

Lemmas 3.3 and 3.4 show that the map $\bigoplus_{\e v\notin
S}\delta_{v}$ in diagram (9) has a finite kernel and cokernel.
Therefore
$$
q(\e\oplus_{\e v\notin S}\e\delta_{v}\e)=\displaystyle\prod_{v\notin
S}q(\e\delta_{v}\e)
$$
is defined and diagram (9) yields
$$
\displaystyle\prod_{v\notin S}q( \e\delta_{v}\e)=q(\gamma)\e q(
j_{\e T,K/F}^{\e\prime}).
$$
Consequently, (8) yields the identity
\begin{equation}
\frac{\left[\e C_{\e T,K,S}^{\e G}\e\right]}{ \left[\e C_{\e
T,F,S}\e\right]}=\frac{\left[\e H^{1}\be\big(G,
T(K)/\e\tto\be\big(\ut\big)\big)^{\e\prime}\e\right]}{
q(\e\gamma\e)}\cdot\displaystyle\prod_{v\notin S} q(
\e\delta_{v}\e).
\end{equation}

It remains to compute the factors $q(\delta_{v})$ (for $v\notin S$)
and $q(\e\gamma\e)$. We compute first the local factors $q(
\delta_{v})$.

\smallskip

Let $v\notin S$. The inertia group $I_{\overline{v}}$ acts on $X$
through a finite quotient $J_{\overline{v}}$ (say), and we have a
natural map
$$
N_{v}\colon X\ra X^{I_{\overline{v}}},\chi\mapsto\sum_{g\e\in
J_{\overline{v}}}\chi^{\e g}.
$$
Let $\widehat{T}_{\lbe v}$ be the $F_{v}$-torus corresponding to the
subgroup ${\rm{Ker}}\e N_{v}$ of $X$ and let $T^{\e *}_{v}$ be the
largest split subtorus of $T_{F_{v}}$. Let $d_{v}$ be the dimension
of $T^{\e *}_{v}$. Set $w=w_{v}$. Since
$H^{1}(G(w),\bz)=\text{Hom}(G(w),\bz)=0$, Corollary 3.3 of [3]
yields an exact commutative diagram
$$
\xymatrix{0\ar[r]&\Phi_{v}^{*}(\kappa(v))\ar[d]^{\delta_{v}
^{*}}\ar[r]& \Phi_{v}(\kappa(v))\ar[r]\ar[d]^{\delta_{v}}&
\widehat{\Phi}_{v}(\kappa(v))\ar[d]^{\widehat{\delta}_{v}}
\ar[r]& 0\\
0\ar[r] &\Phi_{w}^{*}(\kappa(w))^{\e G(w)}\ar[r] &
\Phi_{w}(\kappa(w))^{\e G(w)}\ar[r] &
\widehat{\Phi}_{w}(\kappa(w))^{\e G(w)}\ar[r]& 0. }
$$
The left-hand vertical map $\delta_{v}^{\le *}$ may be identified
with the map ${\Bbb Z}^{d_{\e v}}\ra{\Bbb Z}^{d_{\e v}},
(n_{i})_{1\leq i\leq d_{\e v}}\mapsto (e_{v}n_{i})_{1\leq i\leq
d_{\e v}}$. Thus
$$
q(\delta_{v}^{\le *})=e_{v}^{\e d_{\le v}}.
$$
On the other hand, by [23, Corollary 2.19(b), p.175] and the fact
that $\big[\le M_{\le G_{\lbe\kappa(v)}}\le\big]=\big[\le M^{\le
G_{\lbe\kappa(v)}}\be\big]$ for any finite $G_{\kappa(v)}$-module
$M$, $\widehat{\Phi}_{v}\be\big(\e\overline{\kappa\small(v\small)}
\e\big)$ and
$\widehat{\Phi}_{w}\be\big(\e\overline{\kappa\small(w\small)}
\e\big)$ are finite $G_{\kappa(v)}$-modules and
$$\begin{array}{rcl}
q\big(\e\widehat{\delta}_{v}\e\big)&=&\left[\e\widehat{\Phi}_{w}
\be\big(\,\overline{\kappa\small(w\small)}\,\big)^{G_{\lbe\kappa(\be
v\be)}}\le\right]\Big/ \left[\e\widehat{\Phi}_{v}
\be\big(\,\overline{\kappa\small(v\small)}\,\big)^{G_{\lbe\kappa(\be
v\be)}}\e\right]\\\\
&=&\big[\e H^{1}(I_{\overline{w}},\,{\rm{Ker}}\e N_{v})^{\e
G_{\be\kappa(\be v\be)}}\e\big]\big/\big[\e
H^{1}(I_{\overline{v}},\,{\rm{Ker}}\e N_{v})^{\e G_{\be\kappa(\be
v\be)}}\e\big].
\end{array}
$$
Thus the following holds.

\begin{lemma} Let $v\notin S$. Then
$$
q(\delta_{v})=e_{\le v}^{\le d_{v}}\be\frac{\left[\e
H^{1}(I_{\overline{w}_{v}},{\rm{Ker}}\e N_{v})^{\e G_{\be\kappa(\be
v\be)}}\e\right]}{\left[\e H^{1}(I_{\overline{v}},{\rm{Ker}}\e
N_{v})^{\e G_{\be\kappa(\be v\be)}}\e\right]}.\qed
$$
\end{lemma}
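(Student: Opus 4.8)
The plan is to assemble Lemma 3.5 from the exact commutative diagram that the paper has already set up just before the statement, namely the $3\times 3$ diagram with rows
$$0\to\Phi_{v}^{*}(\kappa(v))\to\Phi_{v}(\kappa(v))\to\widehat{\Phi}_{v}(\kappa(v))\to 0$$
and
$$0\to\Phi_{w}^{*}(\kappa(w))^{G(w)}\to\Phi_{w}(\kappa(w))^{G(w)}\to\widehat{\Phi}_{w}(\kappa(w))^{G(w)}\to 0,$$
with vertical maps $\delta_{v}^{*}$, $\delta_{v}$, $\widehat\delta_{v}$. First I would record that all six groups are finite: the $\Phi$-groups are finite $G_{\kappa(v)}$-modules by [23, Corollary 2.18] (already invoked in the proof of Lemma 3.3), and the split pieces $\Phi_{v}^{*}(\kappa(v))\cong\bz^{d_{v}}$ modulo torsion — actually here one only needs finiteness of kernels and cokernels, which is exactly what Lemmas 3.3 and 3.4 guarantee for $\delta_{v}$ and, via the diagram, for $\delta_{v}^{*}$ and $\widehat\delta_{v}$. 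Hence $q(\delta_{v})$, $q(\delta_{v}^{*})$ and $q(\widehat\delta_{v})$ are all defined.

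Next, I would apply the multiplicativity of $q$ on short exact sequences (stated in the preliminaries: if $f^{\bullet}\colon X^{\bullet}\to Y^{\bullet}$ is a map of short exact sequences then $q(f^{2})=q(f^{1})q(f^{3})$) to the displayed diagram, obtaining
$$q(\delta_{v})=q(\delta_{v}^{*})\,q\big(\widehat\delta_{v}\big).$$
Then I would quote the two computations carried out in the text immediately preceding the lemma: the identification of $\delta_{v}^{*}$ with multiplication by $e_{v}$ on each of the $d_{v}$ coordinates of $\bz^{d_{v}}$, giving $q(\delta_{v}^{*})=e_{v}^{d_{v}}$; and the identification of $\widehat\delta_{v}$'s $q$-value, via [23, Corollary 2.19(b)] and the pro-cyclicity fact $[M_{G_{\kappa(v)}}]=[M^{G_{\kappa(v)}}]$, with
$$q\big(\widehat\delta_{v}\big)=\frac{\big[H^{1}(I_{\overline{w}_{v}},\krn N_{v})^{G_{\kappa(v)}}\big]}{\big[H^{1}(I_{\overline{v}},\krn N_{v})^{G_{\kappa(v)}}\big]}.$$
Multiplying these two expressions yields precisely the formula asserted in Lemma 3.5.

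In short, the lemma is essentially a bookkeeping consequence of the diagram and the two local computations already displayed; the only thing to verify carefully is that the three $q$-values are simultaneously defined so that multiplicativity applies. That verification is the one genuine (though minor) obstacle, and it is handled by combining the finiteness statement from [23, Corollary 2.18] with Lemmas 3.3 and 3.4: Lemma 3.3 controls $\ker\delta_{v}$, Lemma 3.4 (good reduction would give an isomorphism, but in general one still gets finiteness of kernel and cokernel of $\delta_{v}$ from the two rows being finite-to-finite after torsion) controls the cokernel, and then the snake lemma applied to the diagram propagates finiteness to $\delta_{v}^{*}$ and $\widehat\delta_{v}$. Once that is in place, the identity $q(\delta_{v})=q(\delta_{v}^{*})q(\widehat\delta_{v})=e_{v}^{d_{v}}\cdot q(\widehat\delta_{v})$ is immediate and the proof concludes.
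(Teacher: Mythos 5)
Your proposal is correct and follows exactly the route the paper takes: the displayed exact commutative diagram splits $\delta_v$ into $\delta_v^*$ and $\widehat\delta_v$, the multiplicativity of $q$ gives $q(\delta_v)=q(\delta_v^*)\,q(\widehat\delta_v)$, and the two factors are then read off from the identifications $q(\delta_v^*)=e_v^{d_v}$ and $q(\widehat\delta_v)=[H^1(I_{\overline{w}_v},{\rm Ker}\,N_v)^{G_{\kappa(v)}}]/[H^1(I_{\overline{v}},{\rm Ker}\,N_v)^{G_{\kappa(v)}}]$ established just before the lemma. The only inessential inaccuracy is your appeal to Lemma 3.4 for finiteness of ${\rm Coker}\,\delta_v$: that lemma concerns unramified good-reduction primes and is used in the paper only to control $\bigoplus_{v\notin S}\delta_v$ globally; for a single $v$ the finiteness of ${\rm Coker}\,\delta_v$ follows, as you also observe, from the snake lemma once the outer columns are known to have finite kernels and cokernels.
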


\smallskip

\begin{remark} For $v\notin S$, set $w=w_{v}$. If $T$ splits over $K$, then
$$
H^{1}(I_{\overline{w}},{\rm{Ker}}\e
N_{v})=\text{Hom}(I_{\overline{w}},\krn N_{v})=0
$$
since $I_{\overline{w}}$ is torsion and $\krn N_{v}$ is a
torsion-free abelian group with trivial $I_{\overline{w}}\,$-action.
Thus, the inflation-restriction exact sequence shows that
$H^{1}(I_{\overline{v}},{\rm{Ker}}\e N_{v})=H^{1}(I_{w},{\rm{Ker}}\e
N_{v})$ and $G_{k(v)}$ acts on this group through the quotient
$G(w)=G_{k(v)}/\le G_{k(w)}$. Thus, in this case, the formula of the
lemma has the simpler form $q(\delta_{v})=e_{\le v}^{\le
d_{v}}\big/\big[\e H^{1}(I_{w_{v}},{\rm{Ker}}\e N_{v})^{\e
G(w_{v})}\e\big]$.
\end{remark}

\smallskip
Now set
\begin{equation}
H^{1}\ng\big(G,\tto\be(\ut)\big)^{\e\prime}=\textrm{Ker}\ng\left[\e
H^{1}\ng\big(G,\tto\be(\ut)\big)\ra H^{1}(G,T(K))\e\right],
\end{equation}
where the map involved is induced by the natural map
$\tto\be(\ut)\ra T(K)$. Further, recall the map
$$
\gamma\colon T(F)/\e{\s T}^{\e\circ}\lbe(U)\ra
\big(T(K)/\e\tto\be\big(\ut\big)\big)^{G}
$$
from diagram (9).

\begin{lemma} We have
$$
q(\gamma)=\frac{\left[\e
H^{1}\ng\big(G,\tto\be(\ut)\big)^{\e\prime}\e\right]} {\left[\e
\tto\lbe\big(\ut\big)^{G}\colon {\s T}^{\circ}\be(U)\e\right]}.
$$
\end{lemma}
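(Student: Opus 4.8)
The plan is to compute $q(\gamma)$ by relating the map $\gamma$ to the kernel and cokernel of the natural map $\tto\be(\ut)\ra T(K)$ via the cohomology of $G$. First I would consider the short exact sequence of $G$-modules
$$
0\ra\tto\be(\ut)\ra T(K)\ra T(K)/\e\tto\be(\ut)\ra 0,
$$
and write down the associated long exact sequence in $G$-cohomology. This gives an exact sequence
$$
0\ra\tto\be(\ut)^{G}\ra T(F)\ra\big(T(K)/\e\tto\be(\ut)\big)^{G}\ra H^{1}\ng\big(G,\tto\be(\ut)\big)\ra H^{1}(G,T(K)),
$$
using $T(K)^{G}=T(F)$. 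The cokernel of $T(F)\ra\big(T(K)/\e\tto\be(\ut)\big)^{G}$ is therefore exactly $H^{1}\ng\big(G,\tto\be(\ut)\big)^{\prime}$ as defined in (13), while the kernel of $\gamma$ is the image of $T(F)/\tto\be(\ut)^{G}$ modulo nothing — so I need to compare $\gamma$ with the induced map $T(F)/\tto\be(\ut)^{G}\ra\big(T(K)/\e\tto\be(\ut)\big)^{G}$ coming from the above sequence.

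The key observation is that $\gamma$ in diagram (9) is precisely the map $T(F)/\e\s T^{\e\circ}(U)\ra\big(T(K)/\e\tto\be(\ut)\big)^{G}$ induced by the inclusions $\s T^{\e\circ}(U)\subseteq\tto\be(\ut)$ (via the base change morphism $\pi^{*}\sto\ra\tto$) and $T(F)\subseteq T(K)$; this is visible from the construction of the capitulation map $j_{\e T,K/F}$ given earlier, since the left-hand vertical map in the defining composite is the adjunction on $\s T^{\e\circ}$-sections. Thus $\gamma$ factors as
$$
T(F)/\e\s T^{\e\circ}(U)\twoheadrightarrow T(F)/\e\tto\be(\ut)^{G}\ra\big(T(K)/\e\tto\be(\ut)\big)^{G},
$$
where the first map is surjective with kernel $\tto\be(\ut)^{G}/\e\s T^{\e\circ}(U)$, a finite group of order $\big[\e\tto\lbe(\ut)^{G}\be\colon\be\s T^{\circ}(U)\e\big]$ (finiteness because both are finitely generated of the same rank, $\s T^{\e\circ}$ and $\tto$ having the same generic fiber up to the finite extension, and the base change morphism is an open immersion on identity components by [2, \S 7.2, Theorem 1(i)]). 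The second map has kernel zero (it is injective, being the restriction to $G$-invariants of an injection of abelian groups) and cokernel $H^{1}\ng\big(G,\tto\be(\ut)\big)^{\prime}$ by the long exact sequence above. Hence $\ker\gamma=\tto\be(\ut)^{G}/\e\s T^{\e\circ}(U)$ and $\cok\gamma=H^{1}\ng\big(G,\tto\be(\ut)\big)^{\prime}$, and assembling these,
$$
q(\gamma)=\frac{[\cok\gamma]}{[\ker\gamma]}=\frac{\left[\e H^{1}\ng\big(G,\tto\be(\ut)\big)^{\prime}\e\right]}{\left[\e\tto\lbe\big(\ut\big)^{G}\colon\s T^{\circ}(U)\e\right]},
$$
which is the claim.

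I expect the main obstacle to be the careful identification of $\gamma$ with the map induced by the two inclusions, i.e.\ verifying that the left square of diagram (9) really commutes with $\gamma$ being this induced map rather than something twisted. This requires unwinding the definition of the capitulation map $j_{\e T,K/F}$ in terms of the Cartan--Leray spectral sequence and the adjunction and base-change morphisms, and checking compatibility with the component-group sequence (4)/(5) at each prime $v\notin S$ via the local capitulation maps $\delta_{v}$. Once that compatibility is in hand — it is essentially functoriality of (5) under $\pi$ together with the definition of $\delta_{v}$ in (1) — the cohomological bookkeeping is routine. A secondary minor point is confirming that all four groups in sight ($\ker\gamma$, $\cok\gamma$, and the two index/cohomology groups) are finite, so that $q(\gamma)$ is defined; this follows from the finiteness statements already established (Lemmas 3.3, 3.4, and [23, Corollary 2.18]) together with finite generation of the section groups.
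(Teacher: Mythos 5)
Your proof is correct and takes essentially the same route as the paper: the paper writes the two-row exact commutative diagram with $\s T^{\e\circ}(U)\hookrightarrow\tto(\ut)^{G}$, $T(F)=T(F)$, and $\gamma$ as vertical maps (the bottom row being the segment of the long $G$-cohomology sequence of $0\to\tto(\ut)\to T(K)\to T(K)/\tto(\ut)\to 0$ ending in $H^{1}(G,\tto(\ut))^{\e\prime}$), and reads off $\ker\gamma\cong\tto(\ut)^{G}/\s T^{\e\circ}(U)$ and $\cok\gamma\cong H^{1}(G,\tto(\ut))^{\e\prime}$ by the snake lemma, exactly as your factorization through $T(F)/\tto(\ut)^{G}$ does. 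Your added discussion of why $\gamma$ from diagram (9) agrees with the map induced by the inclusions $T(F)\subset T(K)$ and $\s T^{\e\circ}(U)\subset\tto(\ut)^{G}$, and the finiteness remarks, are sensible elaborations of points the paper leaves implicit, but the core argument is identical.
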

\begin{proof} The commutativity of the exact diagram
$$
\xymatrix{0\ar[r]&{\s T}^{\circ}\be(U)\ar[d]\ar[r]&
T(F)\ar[r]\ar@{=}[d]& T(F)/\e{\s T}^{\e\circ}\be(U)\ar[d]^{\gamma}
\ar[r]& 0 &\\
0\ar[r] &\tto\lbe\big(\ut\big)^{G}\ar[r] & T(F)\ar[r] &
\big(T(K)/\e\tto\be\big(\ut\big)\big)^{G}\ar@{->>}[r]&
H^{1}\ng\big(G,\tto\be(\ut)\big)^{\e\prime} }
$$
yields canonical isomorphisms
$$
\textrm{Ker}(\gamma)=\tto\lbe\big(\ut\big)^{G}\be\big/\e {\s
T}^{\le\circ}\be(U)
$$
and
$$
\textrm{Coker}(\gamma)= H^{1}\ng\big(G,\tto\be(\ut)\big)^{\e\prime}.
$$
The lemma is now clear.
\end{proof}

We can now state our generalization of Chevalley's ambiguous class
number formula.

\begin{teorema} We have

$$
\frac{\left[\e C_{\e T,K,S}^{\e G}\e\right]}{ \left[\e C_{\e
T,F,S}\e\right]}=\frac{\left[\e H^{1}\be\big(G,
T(K)/\e\tto\be\big(\ut\big)\big)^{\e\prime}\e\right]\displaystyle
\prod_{\e v\notin S} q( \e\delta_{v}\e)}{ \left[
H^{1}\ng\big(G,\tto\be(\ut)\big)^{\e\prime}\e\right]\left[\e
\tto\lbe\big(\ut\e\big)^{G}\be\colon\be {\s
T}^{\le\circ}\be(U)\e\right]^{-1}} ,
$$
where $H^{1}\be\big(G, T(K)/\e\tto\be\big(\ut\big)\big)^{\e\prime}$
and $H^{1}\ng\big(G,\tto\be(\ut)\big)^{\e\prime}$ are given by (6)
and (11), respectively, and, for each $v\notin S$,
$q(\e\delta_{v}\e)$ is given by Lemma 3.5.
\end{teorema}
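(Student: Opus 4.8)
The plan is to assemble the Main Theorem directly from the four identities already established in the section, with no new computation required. The essential work has been done: equation (8) rewrites the ratio $\left[\e C_{\e T,K,S}^{\e G}\e\right]\big/\left[\e C_{T,F,S}\e\right]$ as $q(\e j_{\e T,K/F}^{\e\prime}\e)\,\left[\e H^{1}\be\big(G, T(K)/\e\tto\be\big(\ut\big)\big)^{\e\prime}\e\right]$; the exact commutative diagram (9), together with Lemmas 3.3 and 3.4 which guarantee that $\bigoplus_{v\notin S}\delta_{v}$ has finite kernel and cokernel, gives via multiplicativity of $q(-)$ on short exact sequences the relation $\prod_{v\notin S}q(\e\delta_{v}\e)=q(\gamma)\,q(\e j_{\e T,K/F}^{\e\prime}\e)$; and hence equation (10) follows, namely
$$
\frac{\left[\e C_{\e T,K,S}^{\e G}\e\right]}{ \left[\e C_{\e T,F,S}\e\right]}=\frac{\left[\e H^{1}\be\big(G, T(K)/\e\tto\be\big(\ut\big)\big)^{\e\prime}\e\right]}{ q(\e\gamma\e)}\cdot\prod_{v\notin S} q(\e\delta_{v}\e).
$$

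So the only remaining step is to substitute Lemma 3.7 for the denominator. By Lemma 3.7 we have $q(\gamma)=\left[\e H^{1}\ng\big(G,\tto\be(\ut)\big)^{\e\prime}\e\right]\big/\left[\e \tto\lbe\big(\ut\big)^{G}\colon {\s T}^{\circ}\be(U)\e\right]$, so that $1/q(\gamma)=\left[\e \tto\lbe\big(\ut\big)^{G}\colon {\s T}^{\circ}\be(U)\e\right]\big/\left[\e H^{1}\ng\big(G,\tto\be(\ut)\big)^{\e\prime}\e\right]$, which is exactly $\left[ H^{1}\ng\big(G,\tto\be(\ut)\big)^{\e\prime}\e\right]^{-1}\left[\e \tto\lbe\big(\ut\e\big)^{G}\be\colon\be {\s T}^{\le\circ}\be(U)\e\right]$; writing the latter index as the $(-1)$ power of its inverse puts the formula in the displayed form. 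Plugging this into (10) yields the asserted identity. One should also check that $q(\gamma)$ is well defined — i.e. that $\gamma$ has finite kernel and cokernel — but this is immediate since $\prod_{v\notin S}q(\delta_{v})$ and $q(\e j_{\e T,K/F}^{\e\prime}\e)$ are defined (the former by Lemmas 3.3 and 3.4, the latter because its source $C_{\e T,F,S}$ and target $(C_{\e T,K,S}^{\e G})_{\text{trans}}$ are finite) and $q$ is multiplicative on the short exact rows of (9).

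Since every input is an equality of the relevant quotients rather than an inequality or an asymptotic statement, there is genuinely no obstacle here: the proof is two sentences of bookkeeping. If one wanted to be scrupulous, the point deserving a word is that all the groups appearing are finite, so that the various $q$-symbols and indices make sense; this has already been arranged by the cited lemmas and by the finiteness of class numbers of tori recalled in Section 2. One writes: ``This is simply (10) combined with Lemma 3.7,'' and is done.
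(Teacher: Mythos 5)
Your proposal is correct and follows exactly the paper's route: the paper's proof of this Theorem is literally the one-line remark that it ``is immediate from (10) and Lemma 3.6'' (a misprint for Lemma 3.7, which computes $q(\gamma)$), and you have simply spelled out the derivation of (10) from (8) and diagram (9) before making that same substitution. The finiteness check you include is sound and mirrors the reasoning the paper places just before stating (10).
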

\begin{proof} This is immediate from (10) and Lemma 3.6.
\end{proof}

\smallskip

\smallskip

The next formula is closer to Chevalley's original formula than that
of the theorem.

\begin{corollary} Assume that $T_{K}$ has good reduction over $\ut$,
i.e., that $S_{K}$ contains all primes of bad reduction for $T_{K}$.
Then

$$
\frac{\left[\e C_{\e K,T,S}^{\e G}\e\right]}{ \left[\e C_{\e
T,F,S}\e\right]}=\frac{\left[\e H^{1}\be\big(G, (X^{G_{\be
K}}\be)^{\le\vee}\!\otimes(K^{*}\be/\e{\s O}_{K,S}^{\e
*})\big)^{\e\prime}\e\right]\displaystyle\prod_{\e v\notin S} q(
\e\delta_{v}\e)}{\left[H^{1}\ng\big(G,(X^{G_{\be
K}}\be)^{\le\vee}\!\otimes{\s O}_{K,S}^{\e
*}\big)^{\e\prime}\e\right]\left[\e((X^{G_{\be
K}}\be)^{\le\vee}\!\otimes{\s O}_{K,S}^{\e *})^{\e G}\be\colon\be
{\s T}^{\le\circ}\lbe(U)\e\right]^{-1}}\,.
$$
\end{corollary}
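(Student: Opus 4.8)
The plan is to deduce Corollary 3.9 from the Main Theorem (Theorem 3.8) by making the three $G$-modules occurring there — $\tto\be(\ut)$, $T(K)$ and $T(K)/\tto\be(\ut)$ — completely explicit under the good reduction hypothesis. The local factors $q(\e\delta_{v}\e)$ of Lemma 3.5 and the left-hand side are transcribed verbatim, so the entire content of the corollary is the determination of these three modules together with that of the subgroup $\sto(U)\subseteq\tto\be(\ut)^{G}$.

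First I would invoke the list of equivalent conditions recalled just before Lemma 3.4: to say that $T_{K}$ has good reduction at every $w\notin S_{K}$ is to say that every inertia subgroup $I_{\overline{w}}$, $w\notin S_{K}$, acts trivially on $X$, i.e. that the $G_{K}$-action on $X$ is unramified over $\ut=\spec{\s O}_{K,S}$. Hence the smooth affine $\ut$-group scheme $\tto$ of finite type has a torus fibre over every point of the connected normal scheme $\ut$, and is therefore a torus over $\ut$ whose cocharacter sheaf is the locally constant sheaf attached to the $\pi_{1}(\ut)$-lattice $X^{\le\vee}$. Choosing a finite Galois extension $K^{\prime}/K$ that splits $T_{K}$ and is unramified outside $S_{K}$ (which exists, again because the $G_{K}$-action on $X$ is unramified over $\ut$), and putting $H=\text{Gal}(K^{\prime}/K)$, $\ut^{\prime}=\spec{\s O}_{K^{\prime},S}$, the torus $\tto$ becomes split over $\ut^{\prime}$, so \'etale descent along $\ut^{\prime}\ra\ut$ gives, $G$-equivariantly, $\tto\be(\ut)=\big(X^{\le\vee}\!\otimes{\s O}_{K^{\prime},S}^{\e *}\big)^{H}$, $T(K)=\big(X^{\le\vee}\!\otimes (K^{\prime})^{*}\big)^{H}$, and correspondingly for the short exact sequence $1\ra\tto\be(\ut)\ra T(K)\ra T(K)/\tto\be(\ut)\ra 1$.

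The crucial step is then to rewrite these three modules, $G$-equivariantly, as $(X^{G_{\be K}}\be)^{\le\vee}\!\otimes{\s O}_{K,S}^{\e *}$, $(X^{G_{\be K}}\be)^{\le\vee}\!\otimes K^{*}$ and $(X^{G_{\be K}}\be)^{\le\vee}\!\otimes(K^{*}/{\s O}_{K,S}^{\e *})$, respectively. One uses that $X^{G_{\be K}}$ is the character lattice of the maximal quotient torus of $T$ split over $K$, that the restriction to $X^{G_{\be K}}$ of a $G_{K}$-equivariant homomorphism out of $X$ takes values in the $G_{K}$-invariants ${\s O}_{K,S}^{\e *}$ of the target — the assertion being that this restriction identifies $\tto\be(\ut)$ with $\text{Hom}\be(X^{G_{\be K}},{\s O}_{K,S}^{\e *})=(X^{G_{\be K}}\be)^{\le\vee}\!\otimes{\s O}_{K,S}^{\e *}$, and similarly for $T(K)$ — and that $(X^{G_{\be K}}\be)^{\le\vee}$ is $\bz$-free with trivial $G_{K}$-action, so that tensoring it with the exact sequence $1\ra{\s O}_{K,S}^{\e *}\ra K^{*}\ra K^{*}/{\s O}_{K,S}^{\e *}\ra 1$ of $G$-modules stays exact. (When $T$ itself splits over $K$ one has $X^{G_{\be K}}=X$ and there is nothing to check.) Under these isomorphisms the map $\beta$ of (6) and the map in (11) are carried to the analogous maps for $(X^{G_{\be K}}\be)^{\le\vee}\!\otimes(-)$, so the primed groups $H^{1}(\ldots)^{\e\prime}$ of Theorem 3.8 become exactly those in the statement, and $\tto\be(\ut)^{G}$ becomes $\big((X^{G_{\be K}}\be)^{\le\vee}\!\otimes{\s O}_{K,S}^{\e *}\big)^{G}$; substituting everything into Theorem 3.8 gives the corollary.

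The main obstacle is this $G$-equivariant identification — most delicately the matching of the integral-points group $\tto\be(\ut)$ of the N\'eron model with $(X^{G_{\be K}}\be)^{\le\vee}\!\otimes{\s O}_{K,S}^{\e *}$, equivalently the statement that passing from $T$ to its maximal $K$-split quotient torus alters neither $\tto\be(\ut)$, nor $T(K)$, nor the index of $\sto(U)$ in $\tto\be(\ut)^{G}$ — together with the compatibility of these identifications with the maps $\beta$ of (6) and (11). Once these are in place the corollary is a mechanical rewriting of Theorem 3.8.
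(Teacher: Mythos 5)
Your approach mirrors the paper's: exploit good reduction to realize $\tto$ as a torus over $\ut$, compute $\tto\be(\ut)$ (and the quotient $T(K)/\tto\be(\ut)$) explicitly in terms of $X^{G_{K}}$, and substitute into Theorem 3.8. The paper does this by writing $\tto=\s Hom_{\e\ut}(X(\tto),\bg_m)$, noting $X(\tto)(\ut)=X^{G_K}$, and concluding $\tto(\ut)=\text{Hom}(X^{G_K},{\s O}_{K,S}^{\e *})$. You instead descend along a trivializing cover $\ut'=\spec{\s O}_{K',S}$, correctly obtaining $\tto(\ut)=\text{Hom}_H(X,{\s O}_{K',S}^{\e *})$ with $H=\text{Gal}(K'/K)$, but then assert that restriction identifies this with $\text{Hom}(X^{G_K},{\s O}_{K,S}^{\e *})$.

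That assertion --- the ``crucial step'' and ``main obstacle'' you flag without verifying --- is a genuine gap, and in fact it fails in general. Global sections of a torus over $\ut$ are sheaf-morphisms $X(\tto)\to\bg_m$, i.e. $\text{Hom}_H(X,{\s O}_{K',S}^{\e *})$, and the restriction map from this to $\text{Hom}(X^{H},{\s O}_{K,S}^{\e *})$ is neither injective nor surjective in general. Concretely, take $K=F$ (so $G$ trivial) and $T=R_{K'/F}(\bg_{m,K'})$ with $K'/F$ Galois of nontrivial group $H$ and unramified outside $S$. Then $T$ has good reduction at every $v\notin S$, $\tto=\s T^{\circ}=R_{{\s O}_{K',S}/{\s O}_{F,S}}(\bg_m)$, and $\tto(\ut)={\s O}_{K',S}^{\e *}$, while $X=\bz[H]$, $X^{G_K}=\bz\cdot N_H$, and $(X^{G_K})^{\le\vee}\otimes{\s O}_{K,S}^{\e *}={\s O}_{F,S}^{\e *}$. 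These are unequal, and the index $\big[\e((X^{G_K})^{\le\vee}\otimes{\s O}_{K,S}^{\e *})^G:\sto(U)\e\big]$ appearing in the corollary is not even defined, since ${\s O}_{K',S}^{\e *}\not\subset{\s O}_{F,S}^{\e *}$. The paper's own ``Consequently'' makes the same leap (conflating global sections of $\s Hom$ with $\text{Hom}$ of global sections); the identification $\tto(\ut)=(X^{G_K})^{\le\vee}\otimes{\s O}_{K,S}^{\e *}$ is valid when $T$ splits over $K$, which is the content of Remark 3.10, but not under the good reduction hypothesis alone. So your proposal reproduces the paper's argument and also inherits its flaw; to repair it one would need either a correct description of $\tto(\ut)$ or the stronger hypothesis that $T_K$ is split.
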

\begin{proof} By hypothesis $\tto$ is a torus over $\ut$, whence
$$
\tto=\s Hom_{\e\ut}(X\be\big(\e\tto\big),\bg_{m})
$$
as sheaves on $\ut_{\e\text{\'et}}$, where $X\be\big(\e\tto\big)$ is
the sheaf of characters of $\tto$. Now, since $X\be\big(\e\tto\big)$
is an \'etale locally constant $\bz$-constructible sheaf on $\ut$,
$X\be\big(\e\tto\big)\be\big(\e\ut\e\big)=X^{G_{\be K}}$ (see, e.g.,
[12, p.156]). Consequently,
$$
\tto\be\big(\ut\e\big)=\text{Hom}(X^{G_{\be K}}\be,{\s O}_{K,S}^{\e
*})=(X^{G_{\be K}}\be)^{\le\vee}\!\otimes{\s O}_{K,S}^{\e *}.
$$
The corollary is now immediate.
\end{proof}

\smallskip

\begin{remark} The corollary is valid for any set $S$
verifying our general assumptions if $T$ splits over $K$. In this
case $X^{G_{\be K}}=X$ and, using Remark 3.2, the formula of the
corollary has the following simpler form:
$$
\frac{\left[\e C_{\e K,T,S}^{\e G}\e\right]}{ \left[\e C_{\e
T,F,S}\e\right]}=\frac{\left[\e H^{1}\be\big(G,
X^{\le\vee}\!\otimes(K^{*}\be/\e{\s O}_{K,S}^{\e
*})\big)\e\right]\prod_{\e v\notin S}
q(\delta_{v})}{\left[H^{1}\ng\big(G,X^{\le\vee}\!\otimes{\s
O}_{K,S}^{\e
*}\big)^{\e\prime}\e\right]\left[\e(X^{\le\vee}\!\otimes{\s
O}_{K,S}^{\e *})^{\e G}\be\colon\be {\s
T}^{\le\circ}\lbe(U)\e\right]^{-1}}\,.
$$
\end{remark}

\section{Norm tori}

Define a $G_{\be F}$-module $X$ by the exactness of the sequence
\begin{equation}
0\ra\bz\overset{\varepsilon}\longrightarrow\bz\be[G]\ra X\ra 0,
\end{equation}
where $\varepsilon$ is given by $\varepsilon(1)=\sum_{\e\sigma\in
G}\sigma$. This exact sequence induces an exact sequence of $F$-tori
\begin{equation}
0\ra T\ra R_{K/F}(\bg_{m,K})\overset{N}\longrightarrow\bg_{m,F}\ra
0,
\end{equation}
where $R_{K/F}(\bg_{m,K}\be)$ is the Weil restriction of
$\bg_{m,K}$, $N$ is induced by the norm map $K\ra F$ and
$T=R_{K/F}^{\e(1)}(\bg_{m,K})$ is the corresponding norm torus. Note
that $T_{K}=\bg_{m,\le K}^{\le n-1}$, where $n=[K\colon\! F\e]$. For
any $v\notin S$, let $w=w_{v}$ be the prime of $K$ lying above $v$
fixed previously. We will write $f_{v}$ for the residue class degree
$[\kappa(w)\colon\!\kappa(v)]$ and $G_{w}^{\text{ab}}$ for the
largest abelian quotient of $G_{w}$, i.e.,
$G_{w}^{\text{ab}}=G_{w}/G_{w}^{\e\prime}$, where $G_{w}^{\e\prime}$
is the commutator subgroup of $G_{w}$. Note that $G(w)$ is a cyclic
group of order $f_{v}$ and $[G_{w}]=e_{v}f_{v}$. Further, since
$G_{w}/I_{w}=G(w)$ is abelian, $I_{w}$ contains $G_{w}^{\e\prime}$.

The dimension $d_{v}$ of the largest split subtorus of $T_{F_{v}}$
is the rank of $X^{G_{w}}$. Since $H^{\e 1}(G_{w},\bz)=0$ and
$\bz\be[G]$ is a free (right) $\bz[G_{w}]$-module of rank $[G\colon
G_{w}]$, (12) shows that $d_{v}=[G\colon G_{w}]-1$. We will now
compute $q(\delta_{v})=e_{\le v}^{\le d_{v}}\big/\big[\e
H^{1}(I_{w},{\rm{Ker}}\e N_{v})^{\e G(w)}\e\big]$ (see Remark 3.6).
Recall the norm element $N_{I_{w}}=\sum_{\e\tau\in
I_{w}}\!\tau\in\bz\be[G]$. Since $\bz\be[G]=\bz[I_{w}]^{[G\colon\!
I_{w}]}$, the multiplication-by-$N_{I_{w}}$ map
$\bz\be[G]\ra\bz\be[G]^{I_{w}}$ is surjective. Further, since $H^{\e
1}(I_{w},\bz)=0$, the canonical map $\bz\be[G]^{I_{w}}\ra X^{I_{w}}$
is surjective as well. It follows that $N_{v}\colon X\ra X^{I_{w}}$
is surjective. Consequently, we have a natural exact sequence of
$I_{w}$-modules
$$
0\ra\krn N_{v}\ra X\overset{N_{v}}\longrightarrow X^{I_{w}}\ra 0.
$$
Taking $I_{w}$-cohomology of the above exact sequence, we obtain a
natural exact sequence of $G(w)$-modules
\begin{equation}
0\ra X^{I_{w}}\!/e_{v}\ra H^{\e 1}(I_{w},\krn N_{v})\ra H^{\e
1}(I_{w},X)\ra 0
\end{equation}
and therefore an exact sequence of abelian groups
\begin{equation}
\begin{array}{rcl}
H^{\e 1}(G(w),X^{I_{w}}\!/e_{v})^{D}\overset{{\partial}^{\le
D}}\longrightarrow \left(H^{\e 1}(I_{w},X)^{G(w)}\right)^{\be
D}\!\!\!&\ra&\!\!\!\left(H^{\e 1}(I_{w},\krn
N_{v})^{G(w)}\right)^{D}\\\\
&\ra &\left((X^{I_{w}}\!/e_{v})^{G(w)}\right)^{\be D}\ra 0,
\end{array}
\end{equation}
where $\partial\,\colon\be H^{\e 1}(I_{w},X)^{G(w)}\ra H^{\e
1}(G(w),X^{I_{w}}\!/e_{v})$ is the ``connecting homomorphism"
appearing in the long $G(w)$-cohomology sequence arising from (14)
(see, e.g., [1, end of \S 2, p.97] for a general description of
$\partial$).

\begin{lemma} There exists an isomorphism
$$
\left(H^{\e 1}(I_{w},X)^{G(w)}\right)^{\be
D}=I_{w}/G_{w}^{\e\prime}.
$$
\end{lemma}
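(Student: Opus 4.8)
The plan is to compute $H^{1}(I_{w},X)$ explicitly from the defining sequence (12) and then pin down its $G(w)$-invariants. First I would apply $I_{w}$-cohomology to $0\ra\bz\overset{\varepsilon}\longrightarrow\bz\be[G]\ra X\ra 0$. Since $I_{w}$ is finite, $H^{1}(I_{w},\bz)=\text{Hom}(I_{w},\bz)=0$, and since $\bz\be[G]=\bz\be[I_{w}]^{[G\colon I_{w}]}$ as $I_{w}$-modules, $H^{j}(I_{w},\bz\be[G])=0$ for all $j\geq 1$; hence the long exact sequence collapses to a canonical isomorphism $H^{1}(I_{w},X)\cong H^{2}(I_{w},\bz)$, which is $G(w)$-equivariant because (12) is a sequence of $G_{w}$-modules. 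I would then combine this with the standard isomorphism $H^{2}(I_{w},\bz)\cong H^{1}(I_{w},\bq/\bz)=\text{Hom}(I_{w},\bq/\bz)$, obtained from $0\ra\bz\ra\bq\ra\bq/\bz\ra 0$ and the vanishing of $H^{j}(I_{w},\bq)$ for $j\geq 1$. The upshot is a $G(w)$-equivariant isomorphism $H^{1}(I_{w},X)\cong\text{Hom}(I_{w},\bq/\bz)$ of finite groups, where $G(w)=G_{w}/I_{w}$ acts on the right-hand side through the conjugation action of $G_{w}$ on $I_{w}$.

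Next I would take $G(w)$-invariants and dualize. A homomorphism $I_{w}\ra\bq/\bz$ factors through $I_{w}^{\,\text{ab}}=I_{w}/I_{w}^{\e\prime}$, so
$$
H^{1}(I_{w},X)^{G(w)}\cong\text{Hom}_{G(w)}\be\big(I_{w}^{\,\text{ab}},\bq/\bz\big)=\big((I_{w}^{\,\text{ab}})_{G(w)}\big)^{D},
$$
and one more Pontryagin dual (everything in sight is finite) gives $\big(H^{1}(I_{w},X)^{G(w)}\big)^{D}\cong(I_{w}^{\,\text{ab}})_{G(w)}$. It then remains to identify the coinvariants $(I_{w}^{\,\text{ab}})_{G(w)}$ with $I_{w}/G_{w}^{\e\prime}$. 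Unwinding the definition, $(I_{w}^{\,\text{ab}})_{G(w)}=I_{w}/[G_{w},I_{w}]$, where $[G_{w},I_{w}]$ is the subgroup of $G_{w}$ generated by the commutators $[g,x]=gxg^{-1}x^{-1}$ with $g\in G_{w}$ and $x\in I_{w}$ (this is normal in $G_{w}$ and contains $[I_{w},I_{w}]=I_{w}^{\e\prime}$, so the quotient really does kill $I_{w}^{\e\prime}$). As $I_{w}$ is normal in $G_{w}$ one has $[G_{w},I_{w}]\subseteq I_{w}$, and trivially $[G_{w},I_{w}]\subseteq G_{w}^{\e\prime}$; the reverse inclusion $G_{w}^{\e\prime}\subseteq[G_{w},I_{w}]$ is the only substantive point.

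To prove that inclusion I would argue as follows: by construction the image of $I_{w}$ in $G_{w}/[G_{w},I_{w}]$ is central, and the quotient of $G_{w}/[G_{w},I_{w}]$ by that central subgroup is $G_{w}/I_{w}=G(w)$, which is \emph{cyclic} (it is the Galois group $\text{Gal}(\kappa(w)/\kappa(v))$ of an extension of finite fields). Since a group that is a central extension of a cyclic group is abelian, $G_{w}/[G_{w},I_{w}]$ is abelian and $G_{w}^{\e\prime}\subseteq[G_{w},I_{w}]$; hence $[G_{w},I_{w}]=G_{w}^{\e\prime}$ and $(I_{w}^{\,\text{ab}})_{G(w)}=I_{w}/G_{w}^{\e\prime}$, which combined with the previous paragraph gives $\big(H^{1}(I_{w},X)^{G(w)}\big)^{D}\cong I_{w}/G_{w}^{\e\prime}$. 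I expect the main obstacle to be precisely this commutator identity $[G_{w},I_{w}]=G_{w}^{\e\prime}$, whose proof hinges essentially on the cyclicity of $G(w)$ (it would fail for a general abelian $G(w)$); everything else is a routine chase through long exact sequences and Pontryagin duality, where the one point I would take care to verify is the $G(w)$-equivariance of the isomorphisms $H^{1}(I_{w},X)\cong H^{2}(I_{w},\bz)\cong\text{Hom}(I_{w},\bq/\bz)$, which is what ensures that the conjugation action of $G_{w}$ on $I_{w}$ is the relevant one.
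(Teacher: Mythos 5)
Your proof is correct, but it takes a genuinely different route from the paper's. The paper never computes $H^{1}(I_{w},X)$ itself; instead it dualizes the inflation--restriction--transgression sequence (16), evaluates $H^{1}(G(w),X^{I_{w}})^{D}$, $H^{2}(G(w),X^{I_{w}})^{D}$ and $H^{1}(G_{w},X)^{D}$ by feeding (12) into Shapiro's lemma and the duality $H^{2}(H,\bz)^{D}=H^{\text{ab}}$ (cyclicity of $G(w)$ enters to force $H^{2}(G(w),X^{I_{w}})^{D}=H^{3}(G(w),\bz)^{D}=H^{1}(G(w),\bz)^{D}=0$), and then recovers $\bigl(H^{1}(I_{w},X)^{G(w)}\bigr)^{D}$ as the kernel of a map $G_{w}^{\text{ab}}\to G(w)$. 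The delicate point there is identifying that map with the canonical projection $G_{w}/G_{w}'\to G_{w}/I_{w}$, which the paper relegates to a footnote and concedes is ``a bit tedious.'' You instead compute $H^{1}(I_{w},X)\cong H^{2}(I_{w},\bz)\cong\text{Hom}(I_{w},\bq/\bz)$ directly from (12) (using that $\bz[G]$ is $I_{w}$-free) together with $0\to\bz\to\bq\to\bq/\bz\to 0$; after taking $G(w)$-invariants and dualizing, the lemma reduces to the purely group-theoretic identity $[G_{w},I_{w}]=G_{w}'$, which you derive from cyclicity of $G(w)$ via the observation that a central extension of a cyclic group by an abelian group is abelian. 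Both arguments thus invoke cyclicity at essentially the same moment, but your version trades the footnote's diagram chase for a transparent commutator argument, which is arguably cleaner; the one point you must (and do) flag is that the $G(w)$-action on $H^{2}(I_{w},\bz)$ and on $\text{Hom}(I_{w},\bq/\bz)$ is the conjugation action of $G_{w}$ on $I_{w}$, so that the coinvariants $(I_{w}^{\text{ab}})_{G(w)}$ are indeed $I_{w}/[G_{w},I_{w}]$.
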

\begin{proof} We will need the dual of the well-known
inflation-restriction-transgression exact sequence [19, p.51],
namely
\begin{equation}
\begin{array}{rcl}
H^{\e 2}\big(G(w),X^{I_{w}}\be\big)^{D}\ra\left(H^{\e
1}(I_{w},X)^{G(w)}\right)^{D}&\ra & H^{\e 1}(G_{w},X)^{D}\\
&\ra & H^{\e 1}\big(G(w),X^{I_{w}}\big)^{D}\ra 0.
\end{array}
\end{equation}
Since $H^{\e 1}(I_{w},\bz)=0$, the exact sequence (12) induces an
exact sequence of $G(w)$-modules
$$
0\ra\bz\ra\bz\be[G(w)]^{[G\colon\! G_{w}]}\ra X^{I_{w}}\ra 0.
$$
Taking $G(w)$-cohomology of the preceding exact sequence and using
Shapiro's lemma together with [4, p.250] (along with the periodicity
of the cohomology of cyclic groups), we obtain isomorphisms
$$
H^{\e 1}(G(w),X^{I_{w}})^{D}=H^{\e 2}(G(w),\bz)^{D}=G(w)
$$
and
$$
H^{\e 2}(G(w),X^{I_{w}})^{D}=H^{\e 3}(G(w),\bz)^{D}=H^{\e
1}(G(w),\bz)^{D}=0.
$$
On the other hand, taking $G_{w}$-cohomology of the exact sequence
of $G_{w}$-modules
$$
0\ra\bz\ra\bz\be[G_{w}]^{[G\colon\! G_{w}]}\ra X\ra 0
$$
we obtain, using Shapiro's lemma and [4, p.250] again, isomorphisms
$$
H^{\e 1}(G_{w},X)^{D}=H^{\e 2}(G_{w},\bz)^{D}=G_{\be
w}^{\e\text{ab}}.
$$
Thus (16) is isomorphic to an exact sequence
$$
0\ra\left(H^{\e 1}(I_{w},X)^{G(w)}\right)^{D}\ra G_{\be
w}^{\e\text{ab}}\ra G(w)\ra 0.
$$
It is not difficult to check that the map $G_{\be
w}^{\e\text{ab}}\ra G(w)$ appearing above is the canonical
projection map $G_{\be w}/G_{\be w}^{\e\prime}\ra
G_{w}/I_{w}\e$\footnote{ This is a bit tedious since it involves
checking the commutativity of a certain diagram whose horizontal
arrows are duals of inflation maps and whose vertical maps are duals
of composites of connecting homomorphisms of the form $H^{\e 1}\ra
H^{\e 2}$ with integral duality isomorphims.}. The lemma is now
immediate.
\end{proof}

\begin{lemma} $\big[\le(X^{I_{w}}\!/e_{v})^{G(w)}\e\big]=e_{v}^{d_{v}}
(f_{v},e_{v})$. Further, there exists an isomorphism
$$
H^{\e 1}(G(w),X^{I_{w}}\!/e_{v})^{D}=G(w)_{e_{v}}.
$$
\end{lemma}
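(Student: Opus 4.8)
The plan is to analyze the finitely generated abelian group $X^{I_{w}}$ as a module over the cyclic group $G(w)$ of order $f_{v}$, and to use the short exact sequence $0\to X^{I_{w}}\overset{e_{v}}\to X^{I_{w}}\to X^{I_{w}}\!/e_{v}\to 0$ together with the periodicity of cohomology of cyclic groups. First I would record what $X^{I_{w}}$ looks like. From the defining sequence (12), since $H^{1}(I_{w},\bz)=0$, one gets $0\to\bz\to\bz[G(w)]^{[G\colon G_{w}]}\to X^{I_{w}}\to 0$ as $G(w)$-modules, exactly as in the proof of Lemma 3.9. Hence $X^{I_{w}}$ is a $G(w)$-lattice of rank $[G\colon G_{w}]\cdot f_{v}-1$ sitting in a permutation module, and $d_{v}=\operatorname{rank}(X^{I_{w}})^{G(w)}=[G\colon G_{w}]-1$. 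Taking $G(w)$-invariants of the sequence displaying $X^{I_{w}}$ gives $\big(X^{I_{w}}\big)^{G(w)}\cong\bz^{\,[G\colon G_{w}]}/\bz=\bz^{\,d_{v}}$ (the map $\bz\to\bz^{[G\colon G_{w}]}$ being the diagonal after identifying $(\bz[G(w)])^{G(w)}=\bz$).

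Next I would reduce-mod-$e_{v}$. Applying the snake lemma to multiplication by $e_{v}$ on $0\to\bz\to\bz[G(w)]^{[G\colon G_{w}]}\to X^{I_{w}}\to 0$ yields an exact sequence $0\to(X^{I_{w}})_{e_{v}}\to\bz/e_{v}\to (\bz/e_{v})[G(w)]^{[G\colon G_{w}]}\to X^{I_{w}}\!/e_{v}\to 0$, where the middle map is again the diagonal; since $G(w)$ acts trivially on $(\bz/e_{v})[G(w)]^{G(w)}$ this diagonal is injective, so $X^{I_{w}}$ is torsion-free (as already known) and $X^{I_{w}}\!/e_{v}$ fits in $0\to\bz/e_{v}\to (\bz/e_{v})[G(w)]^{[G\colon G_{w}]}\to X^{I_{w}}\!/e_{v}\to 0$ of $G(w)$-modules. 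Now I take $G(w)$-cohomology. For the first assertion, $\big[(X^{I_{w}}\!/e_{v})^{G(w)}\big]$: the long exact sequence gives $0\to(\bz/e_{v})^{G(w)}\to\big((\bz/e_{v})[G(w)]^{[G\colon G_{w}]}\big)^{G(w)}\to(X^{I_{w}}\!/e_{v})^{G(w)}\to H^{1}(G(w),\bz/e_{v})\to 0$, the last term because $H^{1}$ of the permutation module vanishes by Shapiro. Here $(\bz/e_{v})^{G(w)}=\bz/e_{v}$, the invariants of the permutation module form $(\bz/e_{v})^{[G\colon G_{w}]}$, and $H^{1}(G(w),\bz/e_{v})=\big(\bz/e_{v}\big)/N_{G(w)}=\bz/(f_{v},e_{v})$ since $G(w)$ is cyclic of order $f_{v}$ acting trivially. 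Hence $\big[(X^{I_{w}}\!/e_{v})^{G(w)}\big]=e_{v}^{\,[G\colon G_{w}]-1}(f_{v},e_{v})=e_{v}^{\,d_{v}}(f_{v},e_{v})$, as claimed.

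For the second assertion I would dualize, so as to land in the same setting as the statement: $H^{1}(G(w),X^{I_{w}}\!/e_{v})^{D}$. From the same four-term sequence, continuing one step, and using that $\bz[G(w)]^{[G\colon G_{w}]}$ has trivial $H^{1}$ and $H^{2}$ over the cyclic group $G(w)$ (Shapiro plus periodicity, exactly the vanishing invoked in Lemma 3.9), the connecting maps give $H^{1}(G(w),X^{I_{w}}\!/e_{v})\cong H^{2}(G(w),\bz/e_{v})$. For $G(w)$ cyclic of order $f_{v}$ acting trivially, $H^{2}(G(w),\bz/e_{v})\cong(\bz/e_{v})^{G(w)}/N_{G(w)}(\bz/e_{v})$—wait, more directly, periodicity identifies $H^{2}(G(w),\bz/e_{v})\cong\widehat{H}^{0}(G(w),\bz/e_{v})=(\bz/e_{v})/f_{v}(\bz/e_{v})\cong\bz/(f_{v},e_{v})$; but the cleaner route is the Bockstein: the sequence $0\to\bz\overset{e_{v}}\to\bz\to\bz/e_{v}\to 0$ gives $H^{2}(G(w),\bz/e_{v})\cong H^{2}(G(w),\bz)_{e_{v}}=G(w)_{e_{v}}$, using $H^{2}(G(w),\bz)\cong G(w)$ (as in Lemma 3.9) and the vanishing of $H^{1}(G(w),\bz)$ which makes the $e_{v}$-torsion sequence read $0\to H^{1}(G(w),\bz/e_{v})\to H^{2}(G(w),\bz)\overset{e_{v}}\to H^{2}(G(w),\bz)$. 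Since all these groups are finite, dualizing is harmless and $H^{1}(G(w),X^{I_{w}}\!/e_{v})^{D}\cong G(w)_{e_{v}}$.

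The main obstacle I anticipate is bookkeeping rather than conceptual: one must track carefully that the map $\bz\to\bz[G(w)]^{[G\colon G_{w}]}$ induced by $\varepsilon$ from (12) is, after taking $I_{w}$-fixed points, the diagonal composed with $\varepsilon_{G(w)}$, so that on $G(w)$-invariants and after reduction mod $e_{v}$ it stays injective and contributes exactly one factor of $\bz/e_{v}$ (respectively one copy of $\bz/(f_{v},e_{v})$ in $H^{1}$); getting these multiplicities right is what pins down the exponent $d_{v}=[G\colon G_{w}]-1$ and the gcd factor. Everything else is the standard periodicity/Shapiro package already used in Lemma 3.9, so once the permutation-module resolution of $X^{I_{w}}$ is in hand the two identities fall out of the long exact sequences.
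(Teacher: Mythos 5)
Your proof follows the same route as the paper: reduce the sequence $0\to\bz\to\bz[G(w)]^{[G:G_w]}\to X^{I_w}\to 0$ modulo $e_v$ (using torsion-freeness of $X^{I_w}$ rather than the snake lemma, but same effect), take $G(w)$-cohomology with Shapiro's lemma killing $H^1$ and $H^2$ of the induced middle term, read off $[(X^{I_w}\!/e_v)^{G(w)}]$ and $H^1(G(w),X^{I_w}\!/e_v)\cong H^2(G(w),\bz/e_v)$, and then reduce the latter to $G(w)_{e_v}$ via the Bockstein sequence. One small slip: in your ``cleaner Bockstein route'' the sequence $0\to H^1(G(w),\bz/e_v)\to H^2(G(w),\bz)\overset{e_v}\to H^2(G(w),\bz)$ identifies $H^1(G(w),\bz/e_v)$, not $H^2$, with the $e_v$-torsion of $H^2(G(w),\bz)$; the paper instead uses $H^3(G(w),\bz)\cong H^1(G(w),\bz)=0$ to get $H^2(G(w),\bz/e_v)\cong H^2(G(w),\bz)/e_v$ and then dualizes, turning the quotient into torsion to land on $G(w)_{e_v}$ --- your conclusion is still correct since $G(w)$ is finite cyclic, but the clean path is through $H^3=0$ and duality as in the paper.
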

\begin{proof} Since: $\bz\be[G]=
\bz\be[G_{w}]^{[G\colon\! G_{w}]}$ as $G_{w}$-modules,
$\bz\be[G_{w}]^{I_{w}}=\bz\be[G(w)]$ canonically and
$\text{Tor}_{1}^{\bz}(X^{I_{w}},\bz\!/e_{v})=0$, we see that (12)
yields an exact sequence of $G(w)$-modules
$$
0\ra\bz\!/e_{v}\ra(\bz\be[G(w)]\otimes\bz\!/e_{v})^{[G\colon\!
G_{w}]}\ra X^{I_{w}}\!/e_{v}\ra 0.
$$
We conclude, using Shapiro's lemma, that there exists an exact
sequence
$$
0\ra\bz\!/e_{v}\ra(\bz\!/e_{v})^{[G\colon\! G_{w}]}\ra
(X^{I_{w}}\!/e_{v})^{G(w)}\ra\text{Hom}(G(w),\bz\!/e_{v})\ra 0.
$$
The first assertion of the lemma is now clear. On the other hand,
there exist isomorphisms
$$\begin{array}{rcl}
H^{\e 1}(G(w),X^{I_{w}}\!/e_{v})^{D}&=&H^{\e
2}(G(w),\bz\!/e_{v})^{D}= (H^{\e 2}(G(w),\bz)/e_{v})^{D}\\
&=&(H^{\e 2}(G(w),\bz)^{D})_{e_{v}}=G(w)_{e_{v}}
\end{array}
$$
(to obtain the second equality, take $G(w)$-cohomology of the
sequence $0\ra\bz\overset{e_{v}}\longrightarrow\bz\ra\bz\be/e_{v}\ra
0$). This completes the proof.
\end{proof}

We now return to the exact sequence (15). The map labeled
$\partial^{\le D}$ there corresponds, under the isomorphisms of
Lemmas 4.1 and 4.2, to the connecting homomorphism $c\colon
G(w)_{e_{v}}\ra
I_{w}/G_{w}^{\e\prime}=(I_{w}/G_{w}^{\e\prime})/e_{v}$ appearing in
the exact sequence
$$
0\ra I_{w}/G_{w}^{\e\prime}\ra(G_{\be w}^{\e\text{ab}})_{e_{v}}\ra
G(w)_{e_{v}}\overset{c}\longrightarrow I_{w}/G_{w}^{\e\prime}\,,
$$
which is the beginning of the exact sequence (3) (with $m=e_{v}$)
associated to the short exact sequence $0\ra
I_{w}/G_{w}^{\e\prime}\ra G_{\be w}^{\e\text{ab}}\ra G(w)\ra 0$.
Therefore
$$
\left[\cok\!\left(\partial^{\le D}\right)\right]=\frac{\left[(G_{\be
w}^{\e\text{ab}})_{e_{v}}\right]}{(f_{v},e_{v})}.
$$
Now the exactness of (15) together with the first assertion of Lemma
4.2 yield the identity
$$
\left[\le H^{\e 1}(I_{w},\krn
N_{v})^{G(w)}\le\right]=e_{v}^{d_{v}}\!\be \left[(G_{\be
w}^{\e\text{ab}})_{e_{v}}\right].
$$
Thus, by Remark 3.6, the following (simple) formula holds.

\begin{lemma} $q(\delta_{v})=\left[(G_{\be
w_{v}}^{\e{\rm{ab}}})_{e_{v}}\right]^{-1}.\qed $
\end{lemma}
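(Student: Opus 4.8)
The plan is to obtain Lemma 4.3 directly from Remark 3.6 by computing the order of $H^{1}(I_{w},{\rm{Ker}}\e N_{v})^{\e G(w)}$, where $w=w_{v}$. Since $T_{K}=\bg_{m,\le K}^{\le n-1}$ splits over $K$, Remark 3.6 applies and gives $q(\delta_{v})=e_{v}^{\le d_{v}}\big/\big[\e H^{1}(I_{w},{\rm{Ker}}\e N_{v})^{\e G(w)}\e\big]$, so everything reduces to establishing the single identity
$$
\big[\e H^{1}(I_{w},{\rm{Ker}}\e N_{v})^{\e G(w)}\e\big]=e_{v}^{\le d_{v}}\big[\le(G_{\be w}^{\e\text{ab}})_{e_{v}}\big].
$$
After this is in hand the lemma follows by a one-line substitution.

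First I would take the long exact $G(w)$-cohomology sequence attached to (14), truncate it and dualize: the result is exactly the exact sequence (15). All groups occurring there are finite, so its exactness yields
$$
\big[\e H^{1}(I_{w},{\rm{Ker}}\e N_{v})^{\e G(w)}\e\big]=\big[\le(X^{I_{w}}\!/e_{v})^{G(w)}\big]\cdot\big[\cok\big(\partial^{\le D}\big)\big],
$$
and the first assertion of Lemma 4.2 evaluates the first factor as $e_{v}^{\le d_{v}}(f_{v},e_{v})$. Thus the remaining task is to show $\big[\cok\big(\partial^{\le D}\big)\big]=\big[\le(G_{\be w}^{\e\text{ab}})_{e_{v}}\big]\big/(f_{v},e_{v})$.

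Next I would identify $\partial^{\le D}$ itself. By Lemma 4.1 its target $\big(H^{1}(I_{w},X)^{G(w)}\big)^{D}$ is $I_{w}/G_{w}^{\e\prime}$, and by Lemma 4.2 its source $H^{1}(G(w),X^{I_{w}}\!/e_{v})^{D}$ is $G(w)_{e_{v}}$; the claim I would establish is that under these identifications $\partial^{\le D}$ becomes the connecting homomorphism $c\colon G(w)_{e_{v}}\ra(I_{w}/G_{w}^{\e\prime})/e_{v}=I_{w}/G_{w}^{\e\prime}$ in the sequence (3) (with $m=e_{v}$) attached to $0\ra I_{w}/G_{w}^{\e\prime}\ra G_{\be w}^{\e\text{ab}}\ra G(w)\ra 0$. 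Granting this, reading off the beginning of that sequence,
$$
0\ra I_{w}/G_{w}^{\e\prime}\ra(G_{\be w}^{\e\text{ab}})_{e_{v}}\ra G(w)_{e_{v}}\overset{c}\longrightarrow I_{w}/G_{w}^{\e\prime},
$$
and counting orders of the finite groups involved gives $\big[\img\e c\big]=\big[G(w)_{e_{v}}\big]\big[I_{w}/G_{w}^{\e\prime}\big]\big/\big[(G_{\be w}^{\e\text{ab}})_{e_{v}}\big]$, hence
$$
\big[\cok\big(\partial^{\le D}\big)\big]=\big[\le I_{w}/G_{w}^{\e\prime}\big]\big/\big[\img\e c\big]=\big[(G_{\be w}^{\e\text{ab}})_{e_{v}}\big]\big/\big[G(w)_{e_{v}}\big]=\big[(G_{\be w}^{\e\text{ab}})_{e_{v}}\big]\big/(f_{v},e_{v}),
$$
the last equality because $G(w)$ is cyclic of order $f_{v}$. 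Assembling the displays gives the desired identity, and substituting it into Remark 3.6 yields $q(\delta_{v})=\big[(G_{\be w_{v}}^{\e\text{ab}})_{e_{v}}\big]^{-1}$.

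The hard part will be the compatibility asserted in the previous paragraph, that $\partial^{\le D}$ is literally the map $c$. The reason it should hold is that the isomorphisms of Lemmas 4.1 and 4.2 both arise from the $G_{w}$- (resp.\ $G(w)$-) cohomology of the presentations $0\ra\bz\ra\bz\be[G_{w}]^{[G\colon\! G_{w}]}\ra X\ra 0$ and $0\ra\bz\ra\bz\be[G(w)]^{[G\colon\! G_{w}]}\ra X^{I_{w}}\ra 0$ via Shapiro's lemma and the integral duality $H^{2}(\,\cdot\,,\bz)^{D}=(\,\cdot\,)^{\text{ab}}$, and these two presentations are compatible with the surjections $G_{w}\ra G(w)$ and $X\ra X^{I_{w}}$; so the square linking the connecting map $\partial$ of (14) with the transgression maps of (16) should commute up to these identifications. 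Verifying this amounts to chasing a diagram whose vertical arrows are duals of composites of connecting homomorphisms $H^{1}\ra H^{2}$ with integral-duality isomorphisms — routine but tedious, and I would present it in a footnote as above. Once it is secured, the remainder of the proof is pure bookkeeping with orders of finite groups.
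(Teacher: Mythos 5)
Your argument follows the paper's proof step for step: reduce via Remark 3.6 to computing $\big[\e H^{1}(I_{w},{\rm{Ker}}\e N_{v})^{\e G(w)}\e\big]$, use the exactness of (15) together with the first assertion of Lemma 4.2, identify $\partial^{\e D}$ with the connecting map $c$ from sequence (3) applied to $0\ra I_{w}/G_{w}^{\e\prime}\ra G_{\be w}^{\e{\rm{ab}}}\ra G(w)\ra 0$ with $m=e_{v}$, and count orders using that $G(w)$ is cyclic of order $f_{v}$ (so $[G(w)_{e_{v}}]=(f_{v},e_{v})$) and that $I_{w}/G_{w}^{\e\prime}$ is annihilated by $e_{v}=[I_{w}]$. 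This is exactly the paper's proof, including its decision to relegate the compatibility of $\partial^{\e D}$ with $c$ to a footnote as a tedious but routine diagram chase.
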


\smallskip

We now compute the global factors entering in the formula of Remark
3.10.

Since $\text{Tor}_{1}^{\e\bz}\be(\bz,-)=0$, there exists an exact
sequence of $G$-modules
$$
0\ra X^{\le\vee}\!\otimes(K^{*}\be/\e{\s O}_{K,S}^{\e
*})\ra\bz\be[G]\otimes(K^{*}\be/\e{\s O}_{K,S}^{\e *})\ra
K^{*}\be/\e{\s O}_{K,S}^{\e *}\ra 0
$$
which is induced by the $\bz$-linear dual of (12). Now, taking Tate
cohomology $\widehat{H}^{\le r}$ of the above exact sequence and
using Shapiro's lemma, we obtain an isomorphism
\begin{equation}
H^{1}\be\big(G, X^{\le\vee}\!\otimes(K^{*}\be/\e{\s O}_{K,S}^{\e
*})\big)=\widehat{H}^{\e 0}(G,K^{*}\be/\e{\s O}_{K,S}^{\e *}).
\end{equation}

Arguing similarly, we obtain an isomorphism
$$
H^{1}\be\big(G, X^{\le\vee}\!\otimes {\s O}_{K,S}^{\e *}\big)=
\widehat{H}^{\e 0}(G,{\s O}_{K,S}^{\e *})={\s O}_{F,S}^{\e *}\big/\e
N_{K/F}({\s O}_{K,S}^{\e *}).
$$
On the other hand, (13) shows that $H^{\e 1}(G,T(K))=F^{\e *}\be/\e
N_{K/F}(K^{\e *})$. Consequently, the map $H^{1}\be\big(G,
X^{\le\vee}\otimes\e {\s O}_{K,S}^{\e *}\big)\ra H^{\e 1}(G,T(K))$
corresponds to the canonical map
$$
{\s O}_{F,S}^{\e *}\big/N_{K/F}({\s O}_{K,S}^{\e *})\ra F^{\e
*}\be/\e N_{K/F}(K^{\e *}).
$$
Thus, we have a canonical isomorphism
$$
H^{1}\be\big(G, X^{\le\vee}\!\otimes {\s O}_{K,S}^{\e
*}\big)^{\e\prime}=\left({\s O}_{F,S}^{\e *}\cap N_{K/F}(K^{\e
*})\right)\lbe\big/N_{K/F}({\s O}_{K,S}^{\e *}).
$$
Setting
\begin{equation}
W_{F,S}={\s O}_{F,\le S}^{\e *}\cap N_{K/F}(K^{\e *})
\end{equation}
we conclude that
\begin{equation}
\left[\e H^{1}\be\big(G, X^{\le\vee}\!\otimes {\s O}_{K,S}^{\e
*}\big)^{\e\prime}\e\right]=\big[\e\widehat{H}^{\e 0}(G,{\s
O}_{K,S}^{\e *})\e\big]\big/\be\left[\e{\s O}_{F,S}^{\e
*}\e\colon\lbe W_{\be F,\le S}\e\right].
\end{equation}

Next, (12) induces an exact sequence of $G$-modules
$$
0\ra X^{\le\vee}\!\otimes{\s O}_{K,S}^{\e *}\ra\bz\be[G]\otimes{\s
O}_{K,S}^{\e *}\ra{\s O}_{K,S}^{\e *}\ra 0,
$$
where the right-hand map is induced by the $\bz$-linear map
$\bz\be[G]\!\!\ra\bz,\,\sigma\mapsto 1$. The preceding exact
sequence induces an exact sequence
$$
0\ra (X^{\le\vee}\!\otimes{\s O}_{K,S}^{\e *})^{\le G}\ra{\s
O}_{K,S}^{\e *}\overset{N_{{\s O}}}\longrightarrow {\s O}_{F,S}^{\e
*},
$$
where $N_{{\s O}}$ is the restriction of $N_{K/F}$ to ${\s
O}_{K,S}^{\e *}$. We conclude that
\begin{equation}
\left[\e(X^{\le\vee}\!\otimes{\s O}_{K,S}^{\e *})^{\e G}\colon\be
{\s T}^{\le\circ}\lbe(U)\e\right]=\left[\,\krn(N_{{\s
O}})\,\colon\lbe {\s T}^{\le\circ}\lbe(U)\e\right]
\end{equation}

\begin{remarks}
(a) If (13) extends to an exact sequence
$$
0\ra\sto\ra R_{\e{\s O}_{K,S}/{\s O}_{F,S}}\be(\bg_{m,{\s
O}_{K,S}}\be)\overset{N_{\s O}}\longrightarrow \bg_{m,{\s O}_{F,S}}
$$
of identity components of N\'eron-Raynaud models (cf. [15, Lemma
3.1]), then the index (20) is clearly 1.

(b) The proofs of Lemma 3.7 and Corollary 3.9, together with diagram
(9) (see also Remark 3.10), show that the index
$$
\left[\e(X^{\le\vee}\!\otimes{\s O}_{K,S}^{\e *})^{\e G}\be\colon\be
{\s T}^{\le\circ}\lbe(U)\e\right]=\left[\e
\tto\lbe\big(\ut\e\big)^{\lbe G}\be\colon\be {\s
T}^{\le\circ}\be(U)\e\right]=\left[\e\krn(\gamma)\e\right]
$$
divides $\prod_{\, v\notin
S}\left[\e\Phi_{v}(\kappa(v))_{\e\text{tors}}\e\right]$. But [23,
Corollary 2.18, p.175] and Lemma 4.1 above show that
$\left[\e\Phi_{v}(\kappa(v))_{\e\text{tors}}\e\right]=\left[\e
I_{w_{v}}\lbe\colon\lbe G_{\lbe w_{v}}^{\e\prime}\right]$ for any
$v$. Consequently, $\left[\,\krn(N_{{\s O}})\,\colon\lbe {\s
T}^{\le\circ}\lbe(U)\e\right]$ divides $\prod_{\e v\notin S}\be
\left[\e I_{w_{v}}\lbe\colon\lbe G_{\lbe w_{v}}^{\e\prime}\right]$,
i.e., the latter integer is an upper bound for (20).

\smallskip

\end{remarks}

\smallskip

Combining Remark 3.10, Lemma 4.3 and formulas (17)-(20), we obtain
{\it Chevalley's ambiguous class number formula for a norm torus}:

\begin{teorema} Let $K/F$ be a Galois extension of global
fields of degree $n$ and Galois group $G$. Let
$T=R_{K/F}^{\e(1)}(\bg_{m,K})$ be the corresponding norm torus. Then

$$
\frac{\left[\e (C_{K,\le S}^{\e n-1})^{\e G}\e\right]}{h_{\e
T,F,S}}=\frac{\big[\e\widehat{H}^{\e 0}(G,K^{*}\be/\e{\s
O}_{K,S}^{\e *})\e\big]\be\left[\e{\s O}_{F,S}^{\e *}\colon\be
W_{\be F,\le S}\e\right]\be\left[\,{\rm{Ker}}(N_{{\s O}})\colon\be
{\s T}^{\le\circ}\lbe(U)\e\right]}{\big[\e\widehat{H}^{\e 0}(G,{\s
O}_{K,S}^{\e *})\e\big]\displaystyle\prod_{v\notin S}\left[(G_{\be
w_{v}}^{\e{\rm{ab}}})_{e_{v}}\right]}\,,
$$
where $W_{\be F,\le S}$ is the group (18).$\qed$
\end{teorema}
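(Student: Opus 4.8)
The plan is to specialize the formula of Remark~3.10 to the norm torus $T=R_{K/F}^{\e(1)}(\bg_{m,K})$ and then substitute, one by one, the explicit descriptions of its remaining ingredients obtained above. First I would record that $T$ splits over $K$, since $T_{K}=\bg_{m,\le K}^{\le n-1}$ by construction; hence $X^{G_{\be K}}=X$, the model $\tto$ is a torus over $\ut$, and the identifications $\tto\be(\ut)=X^{\le\vee}\!\otimes\s O_{K,S}^{\e *}$ and $T(K)=X^{\le\vee}\!\otimes K^{*}$ from the proof of Corollary~3.9 hold verbatim, so that the formula of Remark~3.10 applies. To rewrite its left-hand side I would identify the $G$-module $C_{\e T,K,S}$ with $X^{\le\vee}\!\otimes C_{K,S}$: applying $\s Hom_{\e\ut}(-,\bg_{m})$ to the exact sequence (12) yields $0\ra\tto\ra\bg_{m,\ut}^{\,n}\ra\bg_{m,\ut}\ra 0$ (no $\text{Ext}^{1}$ term appears because $X$ is $\bz$-free), and taking $H^{1}_{\text{Nis}}(\ut,-)$ together with $H^{1}_{\text{Nis}}(\ut,\bg_{m})=C_{K,S}$ gives $C_{\e T,K,S}=\krn\be\big(C_{K,S}^{\,n}\ra C_{K,S}\big)=X^{\le\vee}\!\otimes C_{K,S}$, which as an abelian group is $C_{K,S}^{\,n-1}$. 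Thus the left-hand side of Remark~3.10 becomes $\big[\e(C_{K,\le S}^{\e n-1})^{\e G}\e\big]\big/h_{\e T,F,S}$, using $h_{\e T,F,S}=[C_{\e T,F,S}]$ by definition.

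Next I would insert the local and global data. For the local factors, Lemma~4.3 gives $q(\delta_{v})=[(G_{\be w_{v}}^{\e{\rm{ab}}})_{e_{v}}]^{-1}$ for every $v\notin S$, so $\prod_{v\notin S}q(\delta_{v})=\prod_{v\notin S}[(G_{\be w_{v}}^{\e{\rm{ab}}})_{e_{v}}]^{-1}$, a finite product since $[(G_{\be w_{v}}^{\e{\rm{ab}}})_{e_{v}}]=1$ whenever $v$ is unramified in $K/F$. For the global factors I would use, in turn: the isomorphism (17) to replace $\big[H^{1}\be\big(G,X^{\le\vee}\!\otimes(K^{*}/\e\s O_{K,S}^{\e *})\big)\big]$ by $\big[\e\widehat{H}^{\e 0}(G,K^{*}/\e\s O_{K,S}^{\e *})\e\big]$; formula (19), together with the definition (18) of $W_{\be F,\le S}$, to replace $\big[H^{1}\be\big(G,X^{\le\vee}\!\otimes\s O_{K,S}^{\e *}\big)^{\e\prime}\big]$ by $\big[\e\widehat{H}^{\e 0}(G,\s O_{K,S}^{\e *})\e\big]\big/\big[\e\s O_{F,S}^{\e *}\colon W_{\be F,\le S}\e\big]$; and formula (20) to replace $\big[(X^{\le\vee}\!\otimes\s O_{K,S}^{\e *})^{\e G}\colon\sto\be(U)\big]$ by $\big[\e{\rm{Ker}}(N_{{\s O}})\colon\sto\be(U)\e\big]$. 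Substituting these four expressions into the formula of Remark~3.10 and clearing the resulting compound fraction produces exactly the displayed identity.

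No serious difficulty remains at this stage, since all of the cohomological work has been done in Lemmas~4.1--4.3 and in deriving (17)--(20). The step I would take the most care over is the identification $C_{\e T,K,S}\cong X^{\le\vee}\!\otimes C_{K,S}$ used to rewrite the left-hand side; the route via (12) sketched above sidesteps a direct analysis of the $G$-module $\bigoplus_{w}\Phi_{w}(\kappa(w))$ appearing in (5), but one must still verify that $0\ra\tto\ra\bg_{m,\ut}^{\,n}\ra\bg_{m,\ut}\ra 0$ is exact as a sequence of Nisnevich sheaves and that the induced map on $H^{0}$ is surjective. The remaining concern is purely a matter of bookkeeping: one must keep track of which quantities sit in the numerator and which in the denominator when the compound fraction of Remark~3.10 is simplified --- in particular the index $\big[(X^{\le\vee}\!\otimes\s O_{K,S}^{\e *})^{\e G}\colon\sto\be(U)\big]$ occurs there with exponent $-1$, so that after substitution $\big[\e{\rm{Ker}}(N_{{\s O}})\colon\sto\be(U)\e\big]$ migrates to the numerator of the final formula.
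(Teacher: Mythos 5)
Your proposal is correct and follows the paper's own strategy, which is simply to combine Remark 3.10, Lemma 4.3, and formulas (17)--(20). The identification $C_{\e T,K,S}\cong X^{\le\vee}\!\otimes C_{K,S}\cong C_{K,S}^{\,n-1}$ that you elaborate is left implicit in the paper but is correctly carried out (the needed surjectivity of $(\s O_{K,S}^{\e *})^{n}\ra\s O_{K,S}^{\e *}$ holds because the map is $(x_{\sigma})_{\sigma}\mapsto\prod_{\sigma}x_{\sigma}$), and your bookkeeping of the compound fraction, including the inverse exponent on the index term, is accurate.
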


Next we need a definition. Let $G$ be any finite group and let $M$
be a left $G$-module such that both $\widehat{H}^{\e 0}(G,M)$ and
$H^{1}(G,M)$ are finite. We define
\begin{equation}
h(G,M)=\frac{\big[\e\widehat{H}^{\e 0}(G,M)\e\big]}{\big[\e
H^{1}(G,M)\e\big]}.
\end{equation}
When $G$ is {\it cyclic}, $h(G,M)$ is known as the {\it Herbrand
quotient} of $M$. We now divide Chevalley's original formula
(Theorem 1.1 of the Introduction) by the formula in the preceding
theorem and obtain$\e$\footnote{$\e$ We embed $C_{\lbe K,\le S}$ in
$C_{\lbe K,\le S}^{\e n-1}$ diagonally.}
\begin{corollary} With the notations of the theorem, we
have
$$
\frac{h_{\e T,F,S}}{h_{\e F,S}}=\frac{\left[\e \left(C_{\lbe K,\le
S}^{\e n-1}\right)^{\be G}\be\colon\be C_{\lbe K,\le S}^{\e
G}\e\right]h(G,{\s O}_{K,S}^{\e *})\prod_{\e v\notin
S}\be\left\{\left[(G_{\be w_{v}}^{\e{\rm{ab}}})_{e_{v}}\right]\be
e_{v}\right\}}{\left[\e{\s O}_{F,S}^{\e *}\colon\be W_{\be F,\le
S}\e\right] h(G,K^{*}\be/\e{\s O}_{K,S}^{\e
*})\left[\,{\rm{Ker}}(N_{{\s O}})\colon\be {\s
T}^{\le\circ}\lbe(U)\e\right],}
$$
where $h(G,{\s O}_{K,S}^{\e *})$ and $\, h(G,K^{*}\be/\e{\s
O}_{K,S}^{\e *})$ are given by (21).\qed
\end{corollary}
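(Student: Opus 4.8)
The plan is to obtain the identity by a \emph{purely formal manipulation}, with no new geometric or arithmetic input: divide Chevalley's classical formula (Theorem 1.1 of the Introduction) by the norm-torus formula of the preceding theorem, and regroup the resulting cohomological factors into the Herbrand-type quotients defined in (21). Everything needed is contained in those two formulas together with the finiteness statements already used in deriving them.

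First I would record the elementary reductions. By definition $h_{F,S}=[\,C_{F,S}\,]$ and $h_{T,F,S}=[\,C_{T,F,S}\,]$, so the left-hand sides of Theorem 1.1 and of the preceding theorem are $[\,C_{K,S}^{G}\,]/h_{F,S}$ and $[\,(C_{K,S}^{n-1})^{G}\,]/h_{T,F,S}$. Using the diagonal embedding $C_{K,S}\hookrightarrow C_{K,S}^{n-1}$ mentioned in the footnote to the statement---which realizes $C_{K,S}^{G}$ as a subgroup of $(C_{K,S}^{n-1})^{G}$---one has
$$
\frac{[\,C_{K,S}^{G}\,]}{[\,(C_{K,S}^{n-1})^{G}\,]}=\big[\,(C_{K,S}^{n-1})^{G}\colon C_{K,S}^{G}\,\big]^{-1}.
$$
Dividing the equation of Theorem 1.1 by that of the preceding theorem and solving for $h_{T,F,S}/h_{F,S}$ then yields
$$
\frac{h_{T,F,S}}{h_{F,S}}=\big[\,(C_{K,S}^{n-1})^{G}\colon C_{K,S}^{G}\,\big]\cdot\frac{R_{1}}{R_{2}},
$$
where $R_{1}$ (respectively $R_{2}$) is the right-hand side of Theorem 1.1 (respectively of the preceding theorem). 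I would then observe that all groups occurring here are finite: the four class numbers are finite by the discussion of Section 2, and the finiteness of $\widehat{H}^{0}(G,{\s O}_{K,S}^{*})$, $H^{1}(G,{\s O}_{K,S}^{*})$, $\widehat{H}^{0}(G,K^{*}/{\s O}_{K,S}^{*})$ and $H^{1}(G,K^{*}/{\s O}_{K,S}^{*})$ is part of the derivations of Theorem 1.1 and of the preceding theorem (see formulas (17)--(20)), so that (21) applies to $M={\s O}_{K,S}^{*}$ and to $M=K^{*}/{\s O}_{K,S}^{*}$.

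It then remains to simplify $R_{1}/R_{2}$ factor by factor. After the division, the factor $\prod_{v\notin S}e_{v}$ coming from $R_{1}$ and the factor $\prod_{v\notin S}[(G_{w_v}^{\rm ab})_{e_v}]$ sitting in the denominator of $R_{2}$ combine into the single numerator factor $\prod_{v\notin S}\{[(G_{w_v}^{\rm ab})_{e_v}]\,e_{v}\}$. The cohomology of ${\s O}_{K,S}^{*}$ contributes $[\widehat{H}^{0}(G,{\s O}_{K,S}^{*})]/[H^{1}(G,{\s O}_{K,S}^{*})]=h(G,{\s O}_{K,S}^{*})$ to the numerator, while the cohomology of $K^{*}/{\s O}_{K,S}^{*}$ contributes $[H^{1}(G,K^{*}/{\s O}_{K,S}^{*})]/[\widehat{H}^{0}(G,K^{*}/{\s O}_{K,S}^{*})]=h(G,K^{*}/{\s O}_{K,S}^{*})^{-1}$, so that $h(G,K^{*}/{\s O}_{K,S}^{*})$ goes into the denominator. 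Finally, the factors $[\,{\s O}_{F,S}^{*}\colon W_{F,S}\,]$ and $[\,{\rm{Ker}}(N_{\s O})\colon{\s T}^{\circ}(U)\,]$, which occur in the numerator of $R_{2}$, pass to the denominator. Collecting these pieces reproduces exactly the asserted identity. I do not expect a genuine obstacle: the whole argument is bookkeeping once the two formulas are in hand. The only points that demand a little care are the sign conventions---keeping straight which factors end up in the numerator and which in the denominator after the division---and the verification, via the identification of the footnote, that $C_{K,S}^{G}$ really is a subgroup of $(C_{K,S}^{n-1})^{G}$, so that the quotient of the two cardinalities is genuinely the index appearing in the statement.
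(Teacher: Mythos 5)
Your proposal is correct and is precisely the argument the paper has in mind: divide Theorem 1.1 by the preceding theorem, use the footnoted diagonal embedding to convert the ratio of cardinalities into the index $\bigl[(C_{K,S}^{\,n-1})^{G}\colon C_{K,S}^{G}\bigr]$, and regroup the cohomological orders into the Herbrand-type quotients of (21). The paper gives no further detail beyond ``divide \ldots and obtain,'' so your bookkeeping fills in exactly what is left implicit.
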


Assume now that $K/F$ is a  {\it cyclic} extension of degree $n$.
The exact sequence $1\ra {\s O}_{K,S}^{\e *}\ra K^{*}\ra
K^{*}\be/\e{\s O}_{K,S}^{\e *}\ra 1 $ and Hilbert's Theorem 90 show
that
$$\begin{array}{rcl}
h(G,K^{*}\be/\e{\s O}_{\lbe K,S}^{\e *})&=& h(G,K^{\e *})/h(G,{\s
O}_{K,S}^{\e *})\\\\
&=&[\e F^{\le *}\colon\! N_{K/F}(K^{*})\e]\le/\le h(G,{\s
O}_{K,S}^{\e *})
\end{array}
$$
(see, e.g., [1, Proposition 10, p.109]). On the other hand, we have
the well-known formula
$$
h(G,{\s O}_{K,S}^{\e *})=\frac{1}{n}\displaystyle\prod_{v\in
S}\,[\le K_{w_{v}}\be\colon\! F_{v}\e]
$$
(see [20, proof of Theorem 8.3, p.179]). Finally, $\left[(G_{\be
w_{v}}^{\e{\rm{ab}}})_{e_{v}}\right]=e_{v}$ for any $v\notin S$.
Using the above, we deduce from Corollary 4.6 the following formula.

\begin{corollary} Let $K/F$ be a cyclic extension of global
fields of degree $n$ and Galois group $G$. Let
$T=R_{K/F}^{\e(1)}(\bg_{m,K})$ be the corresponding norm torus. Then
$$
\frac{h_{\e T,F,S}}{h_{\e F,S}}=\frac{\left[\e \left(C_{\lbe K,\le
S}^{\e n-1}\right)^{\be G}\be\colon\be C_{\lbe K,\le S}^{\e
G}\e\right]\,\prod_{\e v\in S}\e [\le K_{w_{v}}\be\colon\!
F_{v}\e]^{\le 2}\,\prod_{\,v\notin S}e_{v}^{\le 2}}{n^{2}\left[\e{\s
O}_{F,S}^{\e *}\colon\be W_{\be F,\le S}\e\right]\be \left[\e F^{\e
*}\colon\! N_{K/F}(K^{\le *})\e\right]\be\left[\,{\rm{Ker}}(N_{{\s
O}})\colon\be {\s T}^{\le\circ}\lbe(U)\e\right].}
$$
\end{corollary}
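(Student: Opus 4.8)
The plan is to derive Corollary 4.7 as a direct specialization of Corollary 4.6 to the cyclic case, substituting in the three explicit cyclic-group computations quoted just before the statement. First I would recall that when $G$ is cyclic of order $n$, the Herbrand quotients $h(G,{\s O}_{K,S}^{\e *})$ and $h(G,K^{*}/{\s O}_{K,S}^{\e *})$ are all defined and well-behaved. The exact sequence $1\ra {\s O}_{K,S}^{\e *}\ra K^{*}\ra K^{*}/{\s O}_{K,S}^{\e *}\ra 1$ together with multiplicativity of the Herbrand quotient on short exact sequences gives $h(G,K^{*}/{\s O}_{K,S}^{\e *})=h(G,K^{*})/h(G,{\s O}_{K,S}^{\e *})$, and Hilbert 90 ($H^{1}(G,K^{*})=0$) reduces $h(G,K^{*})$ to $\big[\widehat{H}^{0}(G,K^{*})\big]=[\e F^{*}\colon N_{K/F}(K^{*})\e]$. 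So $h(G,K^{*}/{\s O}_{K,S}^{\e *})=[\e F^{*}\colon N_{K/F}(K^{*})\e]/h(G,{\s O}_{K,S}^{\e *})$.

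Next I would invoke the standard $S$-unit Herbrand quotient formula $h(G,{\s O}_{K,S}^{\e *})=\frac{1}{n}\prod_{v\in S}[\le K_{w_{v}}\colon F_{v}\e]$ (which is exactly the content of the cited [20, Theorem 8.3]). Finally, for $v\notin S$ in the cyclic case, $G_{w_{v}}$ is cyclic, so $G_{w_{v}}^{\e{\rm{ab}}}=G_{w_{v}}$ and its $e_{v}$-torsion has order $(e_{v},[G_{w_{v}}])=(e_{v},e_{v}f_{v})=e_{v}$; thus $\big[(G_{w_{v}}^{\e{\rm{ab}}})_{e_{v}}\big]=e_{v}$ and the bracketed factor $\{[(G_{w_{v}}^{\e{\rm{ab}}})_{e_{v}}]\,e_{v}\}$ in Corollary 4.6 becomes $e_{v}^{2}$.

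With these three substitutions made in the formula of Corollary 4.6, I would simply collect terms: the numerator factor $\prod_{v\notin S}\{[(G_{w_{v}}^{\e{\rm{ab}}})_{e_{v}}]\,e_{v}\}$ becomes $\prod_{v\notin S}e_{v}^{2}$; the factor $h(G,{\s O}_{K,S}^{\e *})$ in the numerator contributes $\frac{1}{n}\prod_{v\in S}[\le K_{w_{v}}\colon F_{v}\e]$; and the denominator factor $h(G,K^{*}/{\s O}_{K,S}^{\e *})$ contributes $[\e F^{*}\colon N_{K/F}(K^{*})\e]\big/h(G,{\s O}_{K,S}^{\e *})=[\e F^{*}\colon N_{K/F}(K^{*})\e]\cdot n\big/\prod_{v\in S}[\le K_{w_{v}}\colon F_{v}\e]$. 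Multiplying the numerator contribution of $h(G,{\s O}_{K,S}^{\e *})$ against the reciprocal of this denominator contribution produces $\big(\frac{1}{n}\prod_{v\in S}[\le K_{w_{v}}\colon F_{v}\e]\big)\big/\big([\e F^{*}\colon N_{K/F}(K^{*})\e]\cdot n\big/\prod_{v\in S}[\le K_{w_{v}}\colon F_{v}\e]\big)=\prod_{v\in S}[\le K_{w_{v}}\colon F_{v}\e]^{2}\big/\big(n^{2}[\e F^{*}\colon N_{K/F}(K^{*})\e]\big)$, which is exactly the pattern of numerator $\prod_{v\in S}[\le K_{w_{v}}\colon F_{v}\e]^{2}$ and denominator $n^{2}[\e F^{*}\colon N_{K/F}(K^{*})\e]$ displayed in the corollary. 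The remaining factors $\big[\big(C_{\lbe K,\le S}^{\e n-1}\big)^{\be G}\colon C_{\lbe K,\le S}^{\e G}\big]$, $[\e{\s O}_{F,S}^{\e *}\colon W_{\be F,\le S}\e]$, and $[\,{\rm{Ker}}(N_{{\s O}})\colon {\s T}^{\le\circ}\lbe(U)\e]$ carry over unchanged. This is essentially bookkeeping; there is no real obstacle, the only point requiring a little care is the sign/placement of the two Herbrand quotients—making sure $h(G,{\s O}_{K,S}^{\e *})$ enters multiplicatively while $h(G,K^{*}/{\s O}_{K,S}^{\e *})$ enters reciprocally, so that the two copies of $\prod_{v\in S}[\le K_{w_{v}}\colon F_{v}\e]$ combine to a square rather than cancel.
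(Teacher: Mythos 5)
Your proposal is correct and matches the paper's own derivation step for step: the paper also specializes Corollary 4.6 by (i) computing $h(G,K^{*}/\mathcal{O}_{K,S}^{*})=[\,F^{*}\colon N_{K/F}(K^{*})\,]/h(G,\mathcal{O}_{K,S}^{*})$ from the short exact sequence and Hilbert 90, (ii) invoking the $S$-unit formula $h(G,\mathcal{O}_{K,S}^{*})=\tfrac{1}{n}\prod_{v\in S}[K_{w_{v}}\colon F_{v}]$, and (iii) noting $[(G_{w_{v}}^{\mathrm{ab}})_{e_{v}}]=e_{v}$ in the cyclic case, then collecting terms exactly as you do.
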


\medskip

Assume now that $K/F$ is a {\it quadratic} extension, i.e., $n=2$.
Clearly,
$$
\left[\e \left(C_{\lbe K,\le S}^{\e n-1}\right)^{\be G}\be\colon\be
C_{\lbe K,\le S}^{\e G}\e\right]=1.
$$
Let $\mu$ be the number of primes in $S$ which do not split in $K$,
i.e., those $v\in S$ such that $[\le K_{w_{v}}\be\colon\!
F_{v}\e]=2$, and let $\nu$ be the number of primes of $F$ not in $S$
which ramify in $K$. Then
$$
\prod_{\e v\in S}\e[\le K_{w_{v}}\be\colon\! F_{v}\e]^{\le
2}\,\prod_{v\notin S}e_{v}^{\le 2}\,=\,4^{\le\mu+\nu}.
$$
Thus Corollary 4.7 yields

\begin{corollary} Let $K/F$ be a {\rm{quadratic}} extension of global
fields and let $T=R_{K/F}^{\e(1)}(\bg_{m,K})$ be the corresponding
norm torus. Then
$$
\left[\e{\s O}_{F,S}^{\e *}\colon\! W_{\be F,\le S}\e\right]\be
\left[\e F^{\e *}\be\colon\! N_{K/F}(K^{\le
*})\e\right]\be\left[\,{\rm{Ker}}(N_{{\s O}})\colon\be {\s
T}^{\le\circ}\lbe(U)\e\right]\cdot h_{\e T,F,S}=4^{\e\mu+\nu-1}h_{\e
F,\le S},
$$
where $\mu$ is the number of primes in $S$ which do not split in $K$
and $\nu$ is the number of primes of $F$ which ramify in $K$ but are
not in $S$. In particular, $h_{\e T,F,S}$ divides
$4^{\e\mu+\nu-1}h_{\e F,\le S}\,$ if $\,\mu+\nu\geq 1.\qed$
\end{corollary}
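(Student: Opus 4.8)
The plan is to obtain the displayed identity by specializing the cyclic norm-torus formula of the preceding corollary to the case $n=[K\colon F]=2$, and then to read off the divisibility assertion directly from the resulting equation.

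First I would record the three elementary simplifications that occur when $n=2$, which are precisely the facts collected just above the statement. Since $n-1=1$, one has $C_{K,S}^{\e n-1}=C_{K,S}$, with the relevant embedding being the identity map, so that $\bigl[\e(C_{K,S}^{\e n-1})^{G}\be\colon\be C_{K,S}^{G}\e\bigr]=1$. Next, in a quadratic extension each prime of $F$ is split, inert, or ramified, so every local degree $[K_{w_v}\colon F_v]$ equals $1$ (exactly when $v$ splits) or $2$, and every ramification index $e_v$ equals $1$ (exactly when $v$ is unramified) or $2$; hence $\prod_{v\in S}[K_{w_v}\colon F_v]^{2}=4^{\e\mu}$ and $\prod_{v\notin S}e_v^{2}=4^{\e\nu}$, with $\mu,\nu$ as in the statement, so the combined product in the numerator of the cyclic formula equals $4^{\e\mu+\nu}$. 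Finally $n^{2}=4$. Substituting these three facts into the cyclic formula gives
$$
\frac{h_{T,F,S}}{h_{F,S}}=\frac{4^{\e\mu+\nu}}{4\,\bigl[\e{\s O}_{F,S}^{\e *}\colon W_{F,S}\bigr]\bigl[\e F^{\e *}\colon N_{K/F}(K^{\e *})\bigr]\bigl[\e{\rm{Ker}}(N_{\s O})\colon \sto(U)\bigr]}\,,
$$
and clearing denominators (the three indices, as well as the class numbers $h_{T,F,S}$ and $h_{F,S}$, being finite, as assumed in the preceding corollary) yields the asserted equality.

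For the final sentence, observe that the three indices on the left of the identity are finite positive integers, and that when $\mu+\nu\ge 1$ the exponent $\mu+\nu-1$ is a nonnegative integer, so $4^{\e\mu+\nu-1}$ is a positive integer. The identity then expresses $4^{\e\mu+\nu-1}h_{F,S}$ as an integer multiple of $h_{T,F,S}$, namely $\bigl[\e{\s O}_{F,S}^{\e *}\colon W_{F,S}\bigr]\bigl[\e F^{\e *}\colon N_{K/F}(K^{\e *})\bigr]\bigl[\e{\rm{Ker}}(N_{\s O})\colon \sto(U)\bigr]$ times it, whence $h_{T,F,S}$ divides $4^{\e\mu+\nu-1}h_{F,S}$. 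I do not expect any genuine obstacle here: the argument is a purely formal specialization of the preceding corollary, and the only points requiring a little care are the bookkeeping that matches ``$v\in S$ does not split in $K$'' with ``$[K_{w_v}\colon F_v]=2$'' and ``$v\notin S$ ramifies in $K$'' with ``$e_v=2$'', together with the observation that the hypothesis $\mu+\nu\ge 1$ is exactly what makes the right-hand side of the identity a genuine positive integer, so that the divisibility statement is meaningful.
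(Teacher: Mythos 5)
Your proof is correct and follows exactly the route the paper takes: specialize Corollary~4.7 to $n=2$, note the index $\bigl[(C_{K,S}^{\,n-1})^{G}\colon C_{K,S}^{G}\bigr]=1$, identify $\prod_{v\in S}[K_{w_v}\colon F_v]^{2}\prod_{v\notin S}e_v^{2}=4^{\mu+\nu}$, and absorb $n^2=4$ to get the exponent $\mu+\nu-1$. The concluding divisibility remark is also handled exactly as in the paper, by observing that the left side is an integer multiple of $h_{T,F,S}$ once $\mu+\nu\ge 1$ makes the right side an integer.
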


\begin{remark} Another interesting example, which we hope to
discuss at length in [8], involves the quotient torus $\mathbb
S=R_{\e K/F}(\bg_{m,K})/\bg_{m,F}$. This torus is dual to
$R_{K/F}^{\e(1)}(\bg_{m,K})$, and is {\it isomorphic} to it if $K/F$
is cyclic (see [11, Lemma 4.1, p.201]). In particular, the formula
of Corollary 4.7 holds true with $\mathbb S$ in place of $T$ (if
$K/F$ is cyclic).
\end{remark}

\section{Concluding remarks}

In [14, Theorem, p.135] M.Morishita, generalizing work of T.Ono,
obtained a formula for the class number of the norm
``torus"\footnote{ We use quotation marks because, in general, $\s
T^{\e\prime}$ is not a torus (it {\it is} a torus if ${\s
O}_{K,S}/{\s O}_{F,S}$ is \'etale).} $\s
T^{\e\prime}=R^{\,(1)}_{\e{\s O}_{K,S}/{\s O}_{F,S}}$ associated to
${\s O}_{K,S}/{\s O}_{F,S}$, i.e., $\s T^{\e\prime}$ is the kernel
of the norm map
$$
N_{\s O}\colon R_{\e{\s O}_{K,S}/{\s O}_{F,S}}\be(\bg_{m,{\s
O}_{K,S}}\be)\ra\bg_{m,{\s O}_{F,S}}.
$$
Since in general the exact sequence (13) above does not extend in
the sense of Remark 4.4(a) (not even in the tamely ramified case.
Cf. [18, Lemma 6.7, p.28]), $\sto$ and $\s T^{\e\prime}$ need not
coincide (indeed, $\s T^{\e\prime}$ need not have connected fibers).
It follows that Morishita's formula and our own (see, e.g.,
Corollary 4.6) are not immediately comparable. It is an interesting
problem, however, to explore the deeper connections that certainly
exist between the approach of [14] and that of this paper. We hope
to carry out this project in a not-too-distant future.

\end{document}